\def\eps{\varepsilon}
\def\e{{\rm e}}
\def\E{{\mathbb E}}
\def\dd{{\rm d}}
\def\R {\mathbb{R}}
\def\N {\mathbb{N}}
\def\T {{\mathbb T}}
\def\de{{\partial}}
\newcommand\uu {\boldsymbol{u}}
\newcommand{\be}{\begin{equation}}
\newcommand{\ee}{\end{equation}}
\newcommand{\bea}{\begin{eqnarray}}
\newcommand{\eea}{\end{eqnarray}}
\newcommand{\rmd}{{\rm d}}
\newcommand{\spt}{{\rm spt}}
\newcommand{\bx}{{ \boldsymbol{x} }}
\newcommand{\bxi}{{ \boldsymbol{X}} }
\newcommand{\bq}{\begin{equation}}
\newcommand{\eq}{\end{equation}}
\newcommand{\bqa}{\begin{eqnarray*}}
\newcommand{\eqa}{\end{eqnarray*}}
\newtheorem{theorem}{Theorem}
\newtheorem{lemma}{Lemma}
\theoremstyle{definition}
\newtheorem{definition}{Definition}
\newtheorem{remark}{Remark}
\numberwithin{equation}{section}
\title[A stochastic approach to enhanced diffusion]{A stochastic approach to enhanced diffusion}
\author[M. Coti Zelati]{Michele Coti Zelati}
\address{Department of Mathematics, Imperial College London, London, SW7 2AZ, UK}
\email{m.coti-zelati@imperial.ac.uk}
\author[T. D. Drivas]{Theodore D. Drivas}
\address{Department of Mathematics, Princeton University, Princeton, NJ 08544}
\email{tdrivas@math.princeton.edu}
\subjclass[2000]{35K15, 35Q35, 76F25, 76R50, 60H10}
\keywords{Mixing,  enhanced diffusion, stochastic, shear flows, circular flows, drift-diffusion equation}
\begin{document}

%%%%%%%%%%%%%%%%%%%%%%%%%%%%%%%%%%%%%%
\begin{abstract}
We provide examples of initial data which saturate the enhanced diffusion rates proved for general shear flows which are H\"{o}lder regular or Lipschitz continuous with critical points, and for regular circular flows, establishing the sharpness of those results.  Our proof makes use of a probabilistic interpretation of the dissipation of solutions to the advection diffusion equation. 
\end{abstract}

%%%%%%%%%%%%%%%%%%%%%%%%%%%%%%%%%%%%%%%%%%%%

\maketitle

%\tableofcontents

%\vspace{-5mm}
\section{Introduction}

Let $\Omega$ be a two-dimensional domain without boundary, i.e. $\Omega=\R^2$ or $\Omega=\T\times D$, where $D=\T$ or $D= \R$.
Given an incompressible autonomous velocity field $\uu:\Omega\to\R$, it is a classical problem to understand 
the longtime dynamics of a passive tracer $\rho:\mathbb{R}^+\times \Omega\to\R$ satisfying the linear drift-diffusion equation
\begin{align}\label{eq:DDE}
\partial_t \rho + \uu\cdot\nabla \rho = \kappa\Delta \rho,
\end{align}
for a given initial condition
\begin{align}
\rho(0,\bx)=\rho_0(\bx),\qquad \bx\in \Omega.
\end{align}
Above, $\kappa\in(0,1)$ is a small constant molecular diffusivity. 
In this paper, we assume without loss of generality that $\rho_0$ is mean-free, which immediately implies that
\begin{align}\label{meanfree}
\int_{\Omega}\rho(t,\bx)\dd \bx=0, \qquad \forall t\geq0.
\end{align}
It is by now well-established that the longtime behavior of $\rho$ can be described by an effective diffusion equation \cites{RhinesYoung83,vukadinovic2015averaging,kumaresan2018advection,AM91,FW12,FW94,BL94}, at least
under non-degeneracy and growth assumptions on $\uu$. This equation accurately describes the solution to \eqref{eq:DDE} at time-scales that
are longer than the purely diffusive one, proportional to $1/\kappa$.  Letting $\uu=\nabla^\perp \psi$, where $\psi$ is
the stream-function generating $\uu$, the effective diffusion takes place \emph{across} streamline, once $\rho$ has been mixed enough so that 
it is almost constant on the level sets of $\psi$.

On shorter time-scales, it is fairly accurate to think of advection as the dominating process. In this case, the main effect to capture is that 
of mixing \emph{along} streamlines. This concept was formalized mathematically in \cite{CKRZ08} in terms of \emph{enhanced diffusion}
time-scales, faster than $1/\kappa$. See Figures \ref{fig:shear} and \ref{fig:coffee} for a snapshots of a numerical simulation which illustrate this multi-scale phenomenon in the two settings which we study in depth in this paper.
A quantitative definition of enhanced diffusion can be phrased as follows.

\begin{definition}\label{def:enhanced}
Let $\kappa_0\in(0,1)$ and $r:(0,\kappa_0]\to (0,1)$ be a continuous increasing function such that
\begin{align}
\lim_{\kappa\to 0}\frac{\kappa}{r(\kappa)}=0.
\end{align}
The velocity field $\uu$ is diffusion enhancing on a subspace $H\subset L^2$ at rate $r(\kappa)$ if
there exists $C\geq 1$ only depending on $\uu$ such that if $\kappa\in(0, \kappa_0]$ then for every $\rho_0\in H$ we have the enhanced diffusion estimate
\begin{align}\label{eq:endecay}
\|\rho(t)\|^2_{L^2}\leq C \e^{- r(\kappa)t}\|\rho_0\|^2_{L^2}, \qquad \forall t\geq 0.
\end{align}
\end{definition}

\begin{remark}\label{remstream}
The subspace $H$ is just to say that one must avoid data concentrated on level sets of streamlines. Indeed, if the level sets of $\psi$ are closed curves foliating a region of positive area, then one must require that the average of the data along the streamlines
be zero, since convection can have no effect for the values on such sets (the velocity is necessarily tangent to the streamlines).  This is most easily seen for shear flows where the streamlines are
parametrized by values of $y$ and thus one assumes
that the $x$-average for each given $y$ of the data is zero.
\end{remark}

Quantitative enhanced dissipation rates for passive scalars have been studied for shear flows and radially symmetric flows in  
\cites{BCZGH15,BCZ15,BW13, CZDE18,CZD19, WEI18, FI19}.
In view of the incompressibility and smoothness of $\uu$, solutions to \eqref{eq:DDE} satisfy the energy equality
\begin{align}\label{eq:enreq}
\frac12 \|\rho(t)\|^2_{L^2}+\kappa \int_0^t \|\nabla \rho(s)\|_{L^2}^2\rmd s =\frac12\|\rho_0\|^2_{L^2}, \qquad \forall t\geq 0.
\end{align}
In this note, we draw an elementary connection between upper bounds on the dissipation of the form
\begin{align}\label{eq:updisIntro}
\kappa \int_0^t \|\nabla \rho(s)\|_{L^2}^2 \rmd s\leq f(\xi(\kappa) t)\|\rho_0\|^2_{L^2},
\end{align}
 for some continuous increasing function $f:[0,\infty)\to[0,\infty)$ with $f(0)=0$  and some 
increasing function $\xi:(0,\kappa_0]\to (0,1)$  to constraints on the rate of enhanced diffusion $r(\kappa)$ as in Definition \ref{def:enhanced}.
 This connection is made precise in Lemma \ref{lem:crucial}. 

The utility of such a statement is complemented by a simple method to estimate the size of the dissipation by means of a stochastic interpretation.
First, if one introduces the backward trajectories
\be\label{eq:backtraj}
\rmd \bxi_{t,s}(\bx) = \uu( \bxi_{t,s}(\bx) )\rmd s+ \sqrt{2\kappa} \ \hat{\rmd} \begin{pmatrix}B_s\\ W_s\end{pmatrix}, \qquad  \bxi_{t,t}(\bx) =\bx,
\ee
then the well-known Feynman-Kac formula \cite{kunita1997stochastic} expresses the solution of the advection-diffusion equation as an average of the data sampled on the above trajectories
\be\label{feykac}
\rho(t,\bx) = \E  \left[ \rho_0(\bxi_{t,0}(\bx)) \right].
\ee
In the above, $B_s$ and $W_s$ are $\mathbb{R}$ valued independent Brownian motions adapted to the backwards filtration, i.e. satisfying $B_t=W_t =0$ and the expectation is over both processes.  The $\hat{\rmd} $ in \eqref{eq:backtraj} implies a backward It\^{o} SDE.
For a detailed discussion of backward  It\^{o} equations see \cite{kunita1997stochastic}.
The idea in our work is to use the so-called Lagrangian fluctuation-dissipation relation introduced in \cite{drivas2017lagrangian,drivas2017lagrangian2} which directly provides a way to express the dissipation of the scalar field in terms of these same stochastic trajectories
\be\label{eq:FDR}
\kappa \int_0^t  \|\nabla \rho(s)\|_{L^2}^2 \rmd s =\frac{1}{2} \int_{\Omega} {\rm Var}\left(\rho_0(\bxi_{t,0}(\bx)\right)\rmd \bx.
\ee
For completeness, we include a derivation or the above relation in our setting in \S \ref{secFDR}.

\subsection{Shear flows}
In the spatial setting $\Omega=\T\times D$, where $D=\T$ or $D= \R$,
consider the special case when the velocity vector field is a \emph{shear flow}, namely $\uu(x,y)=(u(y),0)$. Specifically, 
given $\delta\in\{0,1\}$ and defining $\Delta_\delta=\delta\de_{xx}+\de_{yy}$, we consider the equation
\begin{align}\label{eq:shear}
\begin{cases}
\partial_t \rho + u\de_x\rho = \kappa\Delta_\delta \rho, \\
\rho(0)=\rho_0.
\end{cases}
\end{align}
Notice that when $\delta=0$ the equation is only partially diffusing, however all the results stated here are not affected by this change.
Since the $x$-average of the solution satisfies the one-dimensional heat equation in $y$, we further restrict the initial data
to be such that
\begin{align}
\int_{\T}\rho_0(x,y)\dd x=0,\qquad \text{for a.e. }  y\in D.
\end{align}
In this way, the scalar has zero average on streamlines (constant $y$-lines) and this condition is propagated
\begin{align}
\int_{\T}\rho(t,x,y)\dd x=0, \qquad \forall t\geq0, \  \text{for a.e. }  y\in D.
\end{align}
In this case, the subspace of Definition \ref{def:enhanced} is
\begin{align}
H=\left\{\rho\in L^2:\ \int_{\T}\rho(x,y)\dd x=0,\ \text{for a.e. }  y\in D\right\}.
\end{align}
See also Remark \ref{remstream}.
From the point of view of enhanced diffusion, the following two theorems, proven in \cite{BCZ15} 
and \cite{WEI18} respectively, are of interest to us.

\medskip

\noindent\textbf{Regular shear flows with critical points.}
Assume that $u \in C^{n+1}(\Omega)$ has a \emph{finite} number
of critical points (generically denoted by $y_{crit}$), where  $n\in \N$ 
denotes the maximal order of vanishing of $u'$ at the critical points, namely, the minimal integer
such that
\begin{equation}\label{eq:last}
u^{(n)}(y_{crit})\neq 0,
\end{equation}
for every $y_{crit}$. Essentially, $|u(y)-u(y_{crit})|\sim |y-y_{crit}|^n$ near the critical points of maximal order. 
In the case of the unbounded domain $\T\times \R$, we assume that $u'$ and $u''$ are bounded functions,
although more general shears can be treated by using weighted spaces \cite{CZ19}.
Then we have the following result. 

\begin{theorem}[\cite{BCZ15}*{Theorem 1.1}]\label{thm:firstthm}
Let $\delta\in\{0,1\}$.
There exist $\eps\in(0,1)$ only depending on $u$ and $\delta$ such that the enhanced diffusion rate on $H$ is
\begin{align}\label{eq:decayrate}
r(\kappa) =\eps \frac{\kappa^{\frac{n}{n+2}}}{1 +\log \kappa^{-1}}.
\end{align}
If the spatial domain is $\T\times\R$ and $u'$ is bounded below, then we have $r(\kappa)=\eps \kappa^{1/3}$.
\end{theorem}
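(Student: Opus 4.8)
The plan is to pass to Fourier variables in $x$ and reduce \eqref{eq:shear} to a one-parameter family of one-dimensional non-self-adjoint problems, then prove a decay estimate for each that is uniform in the frequency. Writing $\rho(t,x,y)=\sum_{k\in\mathbb{Z}\setminus\{0\}}\rho_k(t,y)\e^{ikx}$ --- the sum omits $k=0$ precisely because $\rho_0\in H$ --- each mode solves
\begin{align}
\pa_t\rho_k+iku(y)\rho_k=\kappa(\pa_{yy}-\delta k^2)\rho_k,
\end{align}
with $\rho_k(0,y)$ the $k$-th Fourier coefficient of $\rho_0$. By Plancherel it suffices to establish $\|\rho_k(t)\|_{L^2_y}\leq C\e^{-r(\kappa)t/C}\|\rho_k(0)\|_{L^2_y}$ with $C$ and $r(\kappa)$ independent of $k$; the worst case is $|k|=1$, while large $|k|$ should give faster decay. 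The heuristic fixing the rate is a balance between the diffusive rate $\kappa/\ell^2$ on a $y$-scale $\ell$ and the shearing decorrelation rate $|k|\,|u(y)-u(y_{crit})|\sim|k|\ell^{n}$ at distance $\ell$ from a critical point of order $n$; optimizing $\ell\sim(\kappa/|k|)^{1/(n+2)}$ gives rate $\sim(\kappa|k|^{2/n})^{n/(n+2)}\geq\kappa^{n/(n+2)}$, whereas where $|u'|$ is bounded below the balance $\kappa/\ell^2\sim|k|\ell$ yields the cleaner rate $(\kappa k^2)^{1/3}\geq\kappa^{1/3}$.

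The mechanism can be made rigorous by a hypocoercivity (augmented energy) argument. One introduces a functional of the form
\begin{align}
\Phi[\rho_k]=\|\rho_k\|^2+a(\kappa)\|\pa_y\rho_k\|^2+2b(\kappa)\,\Re\,\l iku'(y)\rho_k,\pa_y\rho_k\r,
\end{align}
with $\kappa$-dependent weights chosen so that $b(\kappa)^2\lesssim a(\kappa)$ (and $\|u'\|_{L^\infty}$ controlled), which guarantees $\Phi[\rho_k]\simeq\|\rho_k\|^2+a(\kappa)\|\pa_y\rho_k\|^2$. Differentiating along the flow, the transport term is skew-adjoint and drops out of the diagonal pieces, the cross term produces the coercive contribution $-k^2\|u'\rho_k\|^2$ via the commutator $[\pa_y,iku]=iku'$, and the remaining terms are absorbed into the diffusion $\kappa\|\pa_y\rho_k\|^2$ and by Cauchy--Schwarz. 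Where $|u'|\geq c>0$ this gives $\tfrac{\dd}{\dd t}\Phi\leq-c(\kappa k^2)^{1/3}\Phi$ with $a\sim(\kappa/|k|)^{2/3}$, $b\sim(\kappa/|k|)^{1/3}$; combined with parabolic smoothing on an initial layer $t\in[0,C/r(\kappa)]$ (to pass from $\Phi$ back to $\|\rho_k\|^2$, at the cost of a constant $C$ in \eqref{eq:endecay}), this yields the case $r(\kappa)=\eps\kappa^{1/3}$ on $\T\times\R$. An alternative route bounds the resolvent of $iku-\kappa(\pa_{yy}-\delta k^2)$ uniformly in $k$ and on $i\R$ and invokes a quantitative Gearhart--Pr\"uss theorem, but it requires the same local analysis near critical points.

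The real difficulty, and the origin of the logarithmic loss, is the presence of critical points, where $u'$ vanishes and the coercive term $\|u'\rho_k\|^2$ degenerates. Near each $y_{crit}$ one has $|u'(y)|\sim|y-y_{crit}|^{n-1}$, so the argument above controls $\rho_k$ only outside a critical layer. The remedy is to localize: on the layer $\{|y-y_{crit}|\lesssim\ell\}$ with $\ell\sim(\kappa/|k|)^{1/(n+2)}$ one runs the energy estimate with a cutoff and weights adapted to $u(y)-u(y_{crit})\sim(y-y_{crit})^{n}$, while on the dyadic annuli $\{2^{j}\ell\lesssim|y-y_{crit}|\lesssim2^{j+1}\ell\}$, $j\geq0$, the shear behaves like a generic shear of slope $\sim(2^{j}\ell)^{n-1}$ and one applies the non-degenerate estimate with $j$-dependent constants. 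Summing over the $O(\log\kappa^{-1})$ annuli needed to reach scale $O(1)$ degrades the rate by a single factor $1+\log\kappa^{-1}$, producing \eqref{eq:decayrate}; the bounded number of critical points keeps the number of such layers finite, and $u\in C^{n+1}$ with $u',u''$ bounded ensures the cutoff errors and remainders are controlled.

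I expect the main obstacle to be exactly this bookkeeping of the localized estimates: choosing the cutoffs and the $j$-dependent weights so that the error terms across consecutive annuli telescope (rather than accumulate worse than logarithmically) and so that the $\kappa$-powers are sharp, together with patching the layer and annulus functionals into one monotone quantity. Securing the uniformity of all constants in $|k|$ --- in particular handling $|k|$ large without losing the frequency gain and without the critical-layer count growing with $k$ --- is a secondary but essential technical point.
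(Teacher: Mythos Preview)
The paper does not prove Theorem~\ref{thm:firstthm}: it is quoted verbatim from \cite{BCZ15}*{Theorem 1.1} as background, and no argument is reproduced here. The only information the paper gives about its proof is the sentence ``The first one uses an adaptation of the so-called hypocoercivity technique \cite{Villani09}.'' Your sketch is therefore not competing with anything in this paper; there is nothing to compare against beyond that one-line attribution.

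That said, your outline is consistent with what the paper asserts about the method: Fourier decomposition in $x$, an augmented energy functional with a cross term $\Re\langle iku'\rho_k,\pa_y\rho_k\rangle$ to generate coercivity from the commutator $[\pa_y,iku]=iku'$, and a localization near critical points to handle the degeneracy of $u'$. As a proof \emph{proposal} rather than a proof, it correctly identifies the mechanism and the source of the logarithmic loss. But it remains a sketch: the actual argument in \cite{BCZ15} does not proceed by a dyadic decomposition into annuli around each critical point with telescoping errors as you suggest; the hypocoercivity functional there is global, with carefully designed weights, and the log loss enters through a spectral-gap/iteration step rather than a count of $O(\log\kappa^{-1})$ scales. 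Your annulus picture is a plausible heuristic for why the loss is at worst logarithmic, but turning it into a rigorous monotone functional would require substantial additional work that is not indicated here.
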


\begin{figure}[h!]
  \centering
  \begin{subfigure}[b]{0.24\linewidth}
    \includegraphics[width=\linewidth]{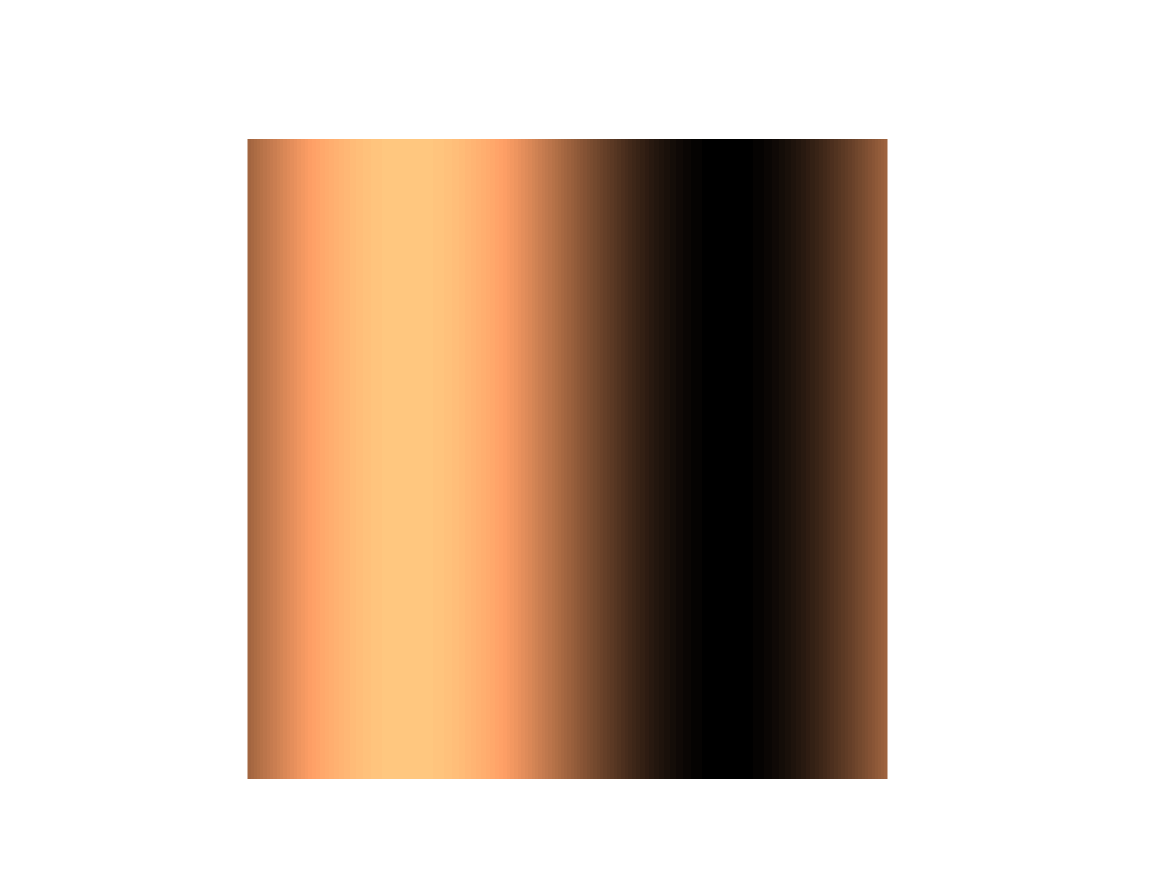}
%     \caption{Coffee.}
  \end{subfigure}
  \begin{subfigure}[b]{0.24\linewidth}
    \includegraphics[width=\linewidth]{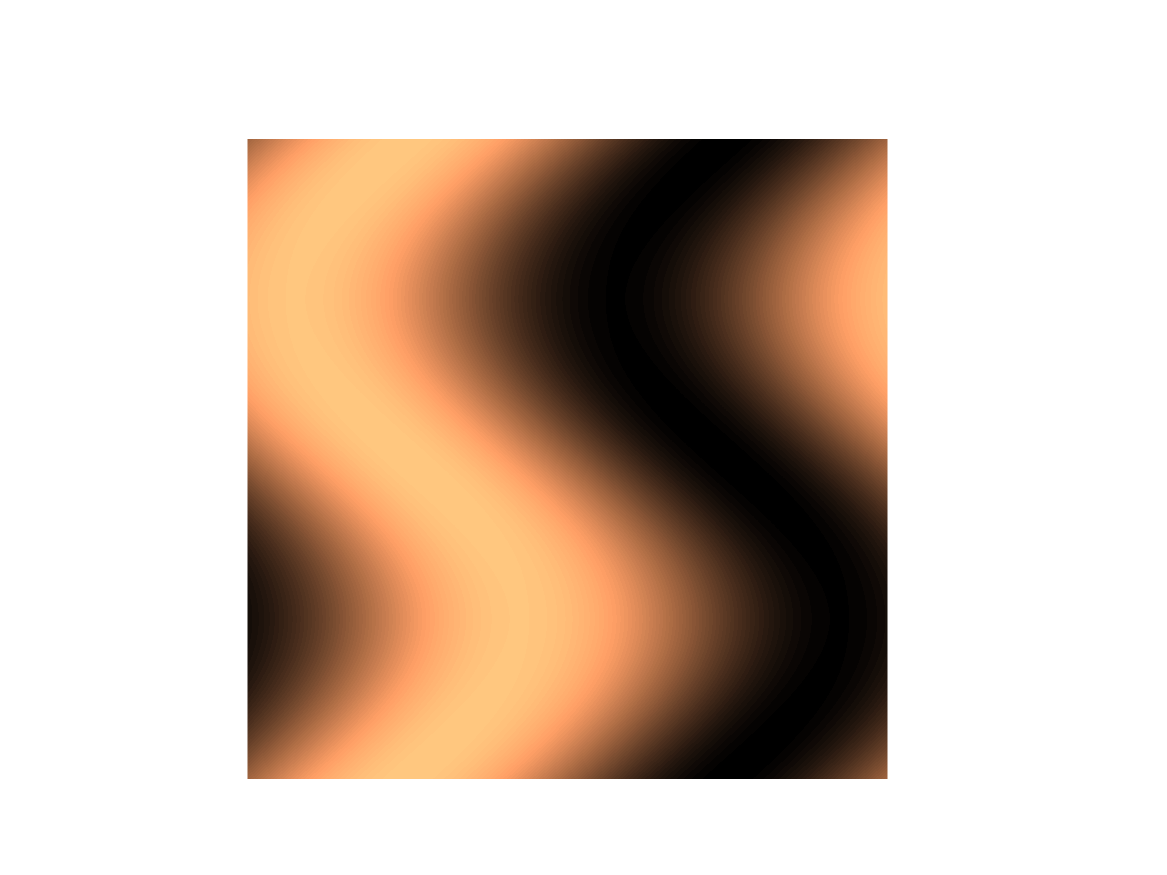}
%    \caption{More coffee.}
  \end{subfigure}
  \begin{subfigure}[b]{0.24\linewidth}
    \includegraphics[width=\linewidth]{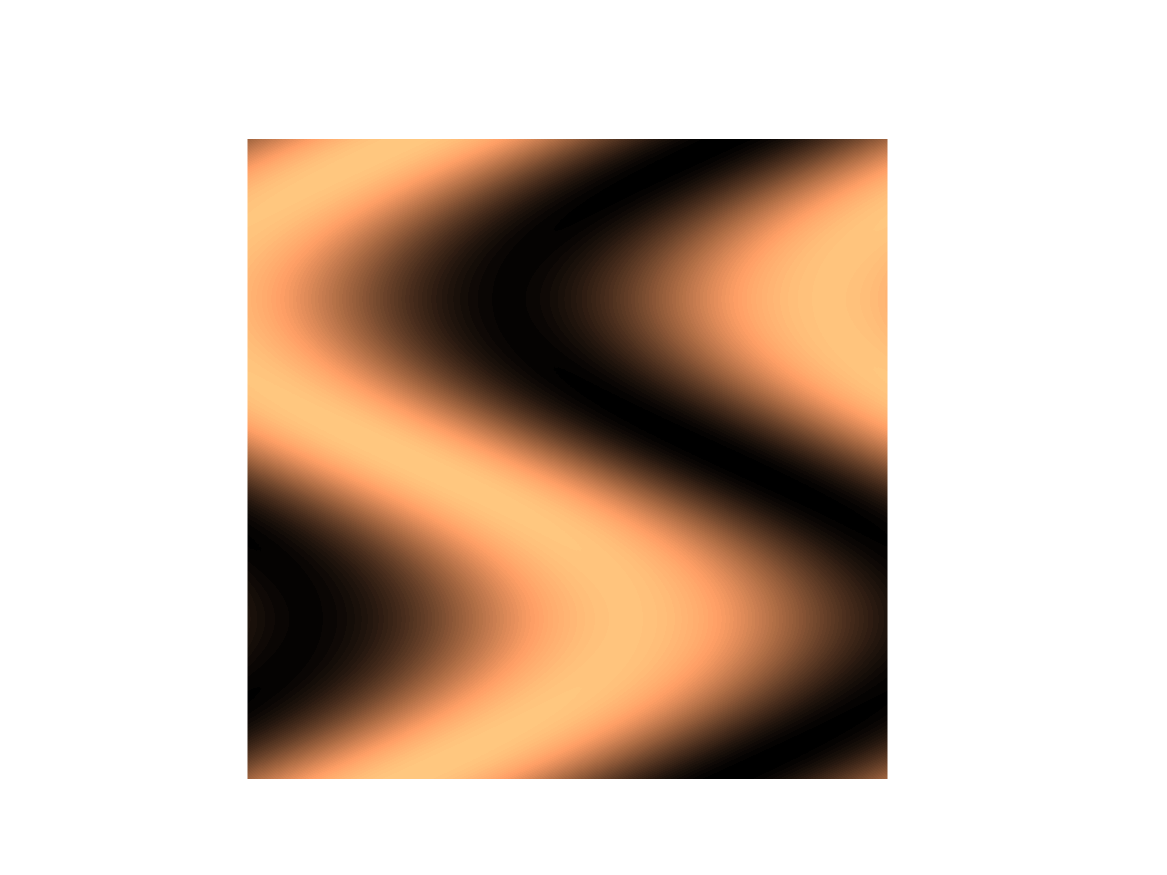}
%    \caption{Tasty coffee.}
  \end{subfigure}
  \begin{subfigure}[b]{0.24\linewidth}
    \includegraphics[width=\linewidth]{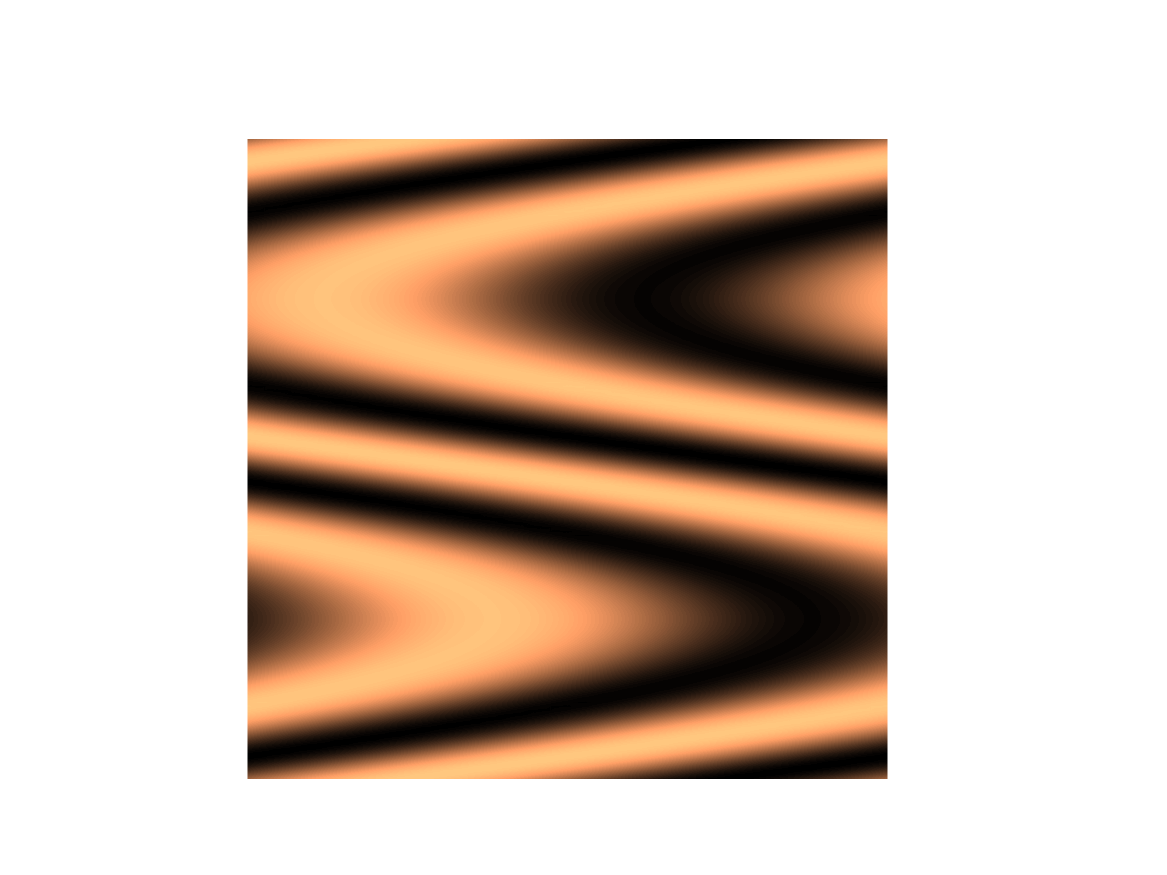}
%    \caption{Too much coffee.}
  \end{subfigure}
    \begin{subfigure}[b]{0.24\linewidth}
    \includegraphics[width=\linewidth]{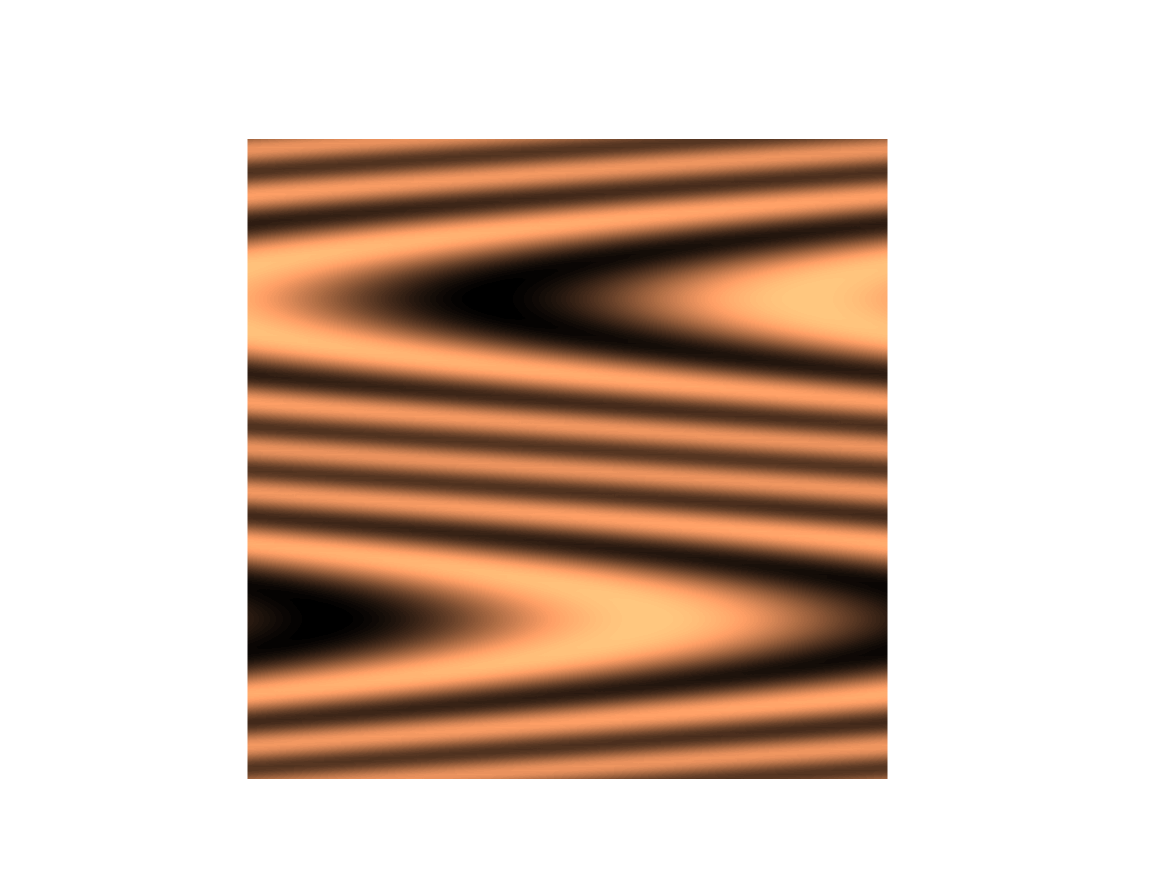}
%     \caption{Coffee.}
  \end{subfigure}
  \begin{subfigure}[b]{0.24\linewidth}
    \includegraphics[width=\linewidth]{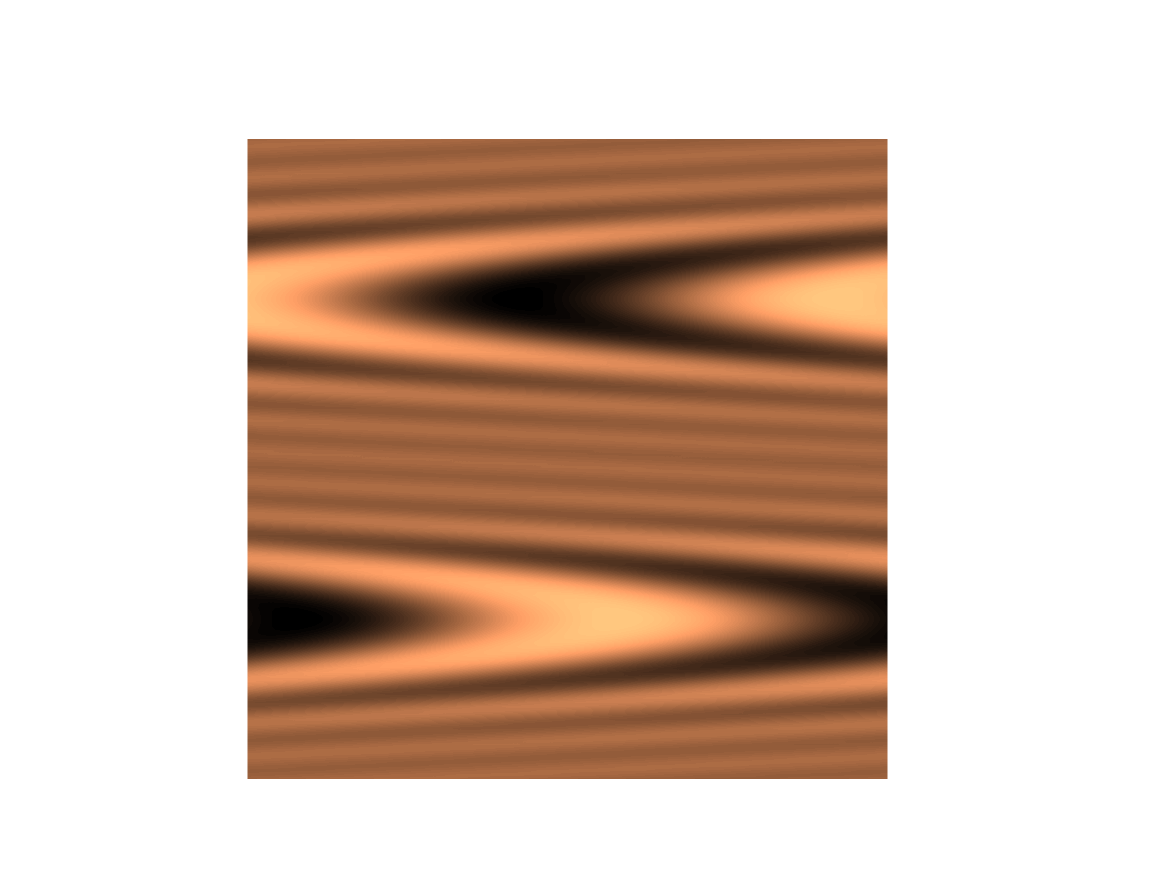}
%    \caption{More coffee.}
  \end{subfigure}
  \begin{subfigure}[b]{0.24\linewidth}
    \includegraphics[width=\linewidth]{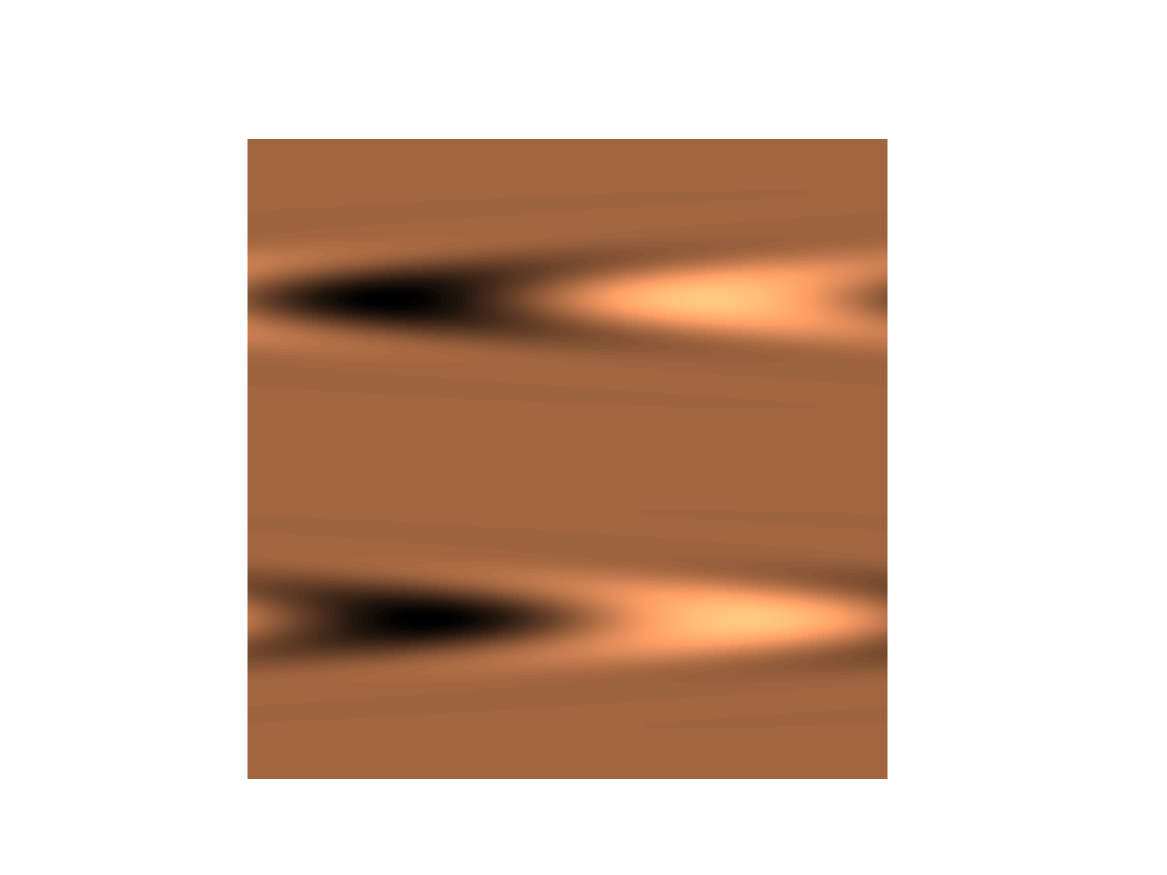}
%    \caption{Tasty coffee.}
  \end{subfigure}
  \begin{subfigure}[b]{0.24\linewidth}
    \includegraphics[width=\linewidth]{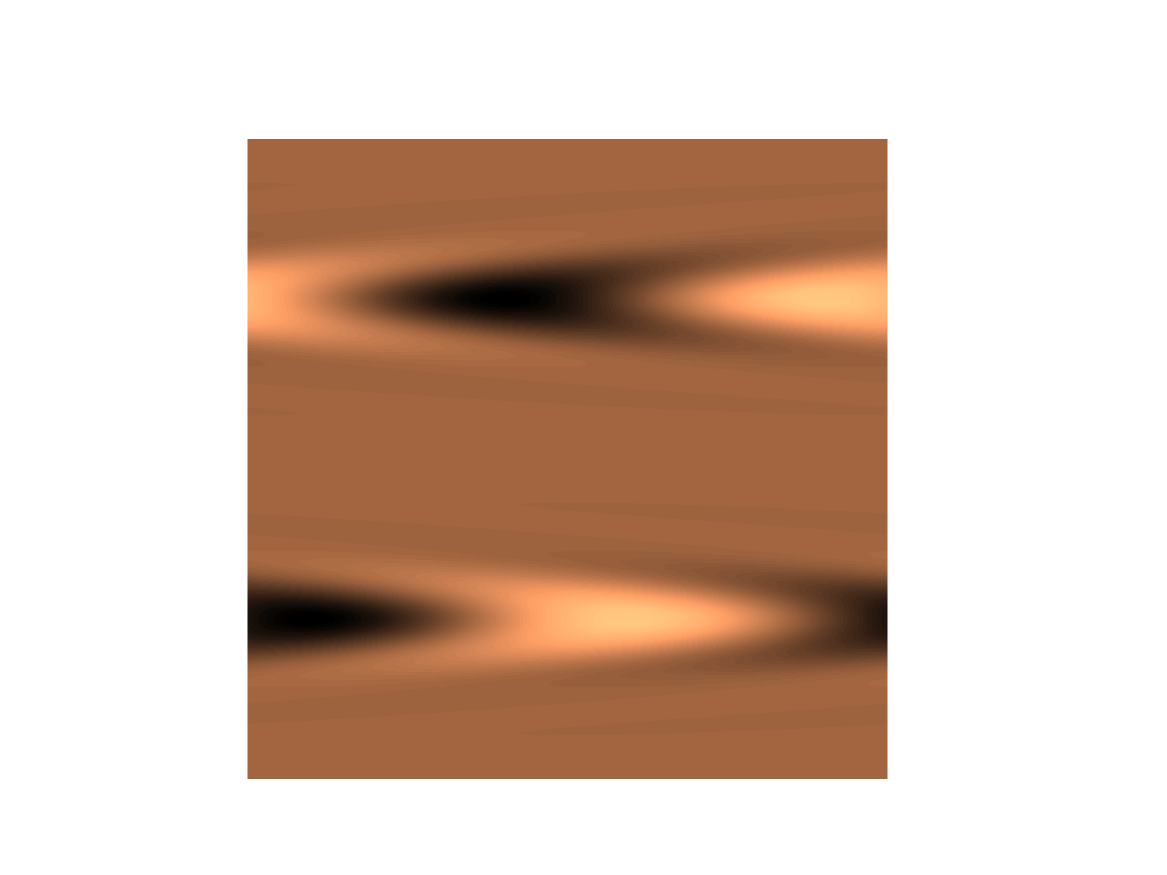}
%    \caption{Too much coffee.}
  \end{subfigure}
  \caption{The evolution of a strip of diffusive ``paint"  sheared in a periodic box by the flow $u(y)=\sin y$. Advection dominates until the paint becomes independent of $x$ in regions away from the two critical points. After this time, the paint diffuses across the (horizontal) streamlines.}
  \label{fig:shear}
\end{figure}

We remark that Theorem \ref{thm:firstthm} entails a semigroup estimate for the linear semigroup generated by \eqref{eq:shear}
and restricted to the invariant subspace $H$. In particular, the initial data $\rho_0$ may be taken to depend on $\kappa$. 
In addition, the logarithmic correction in \eqref{eq:decayrate} is believed to be an artifact of the proof, and it is not present in the 
uniformly monotone case (which corresponds to $n=1$).

\medskip

\noindent\textbf{H\"older shear flows.}
On the periodic domain $\Omega=\T^2$, let $\alpha\in(0,1)$ and consider the Weierstrass function
\begin{align}\label{eq:Wei}
u(y)=\sum_{\ell\geq1} \frac{1}{3^{\alpha\ell}} \sin (3^\ell y).
\end{align}
In this case, $u\in C^{\alpha}$ at every point, and no better. In this case, diffusion can be enhanced at an arbitrarily small polynomial
time-scale, depending on the parameter $\alpha$.
\begin{theorem}[\cite{WEI18}*{Theorem 5.1}]\label{thm:secondthm}
Let $\delta\in\{0,1\}$ and  $\rho_0\in H$.
There exists $\eps\in(0,1)$ only depending on $u$ and $\delta$ such that the enhanced diffusion rate of $u$ in \eqref{eq:Wei}  on $H$ is
\begin{align}\label{eq:decayrate2}
r(\kappa) =\eps \kappa^\frac{\alpha}{\alpha+2}.
\end{align}
\end{theorem}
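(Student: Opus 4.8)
We sketch a proof along the lines of the resolvent method.

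\emph{Reduction to a one-parameter family.} Expanding in Fourier series in $x$, $\rho(t,x,y)=\sum_{k\in\mathbb{Z}\setminus\{0\}}\rho_k(t,y)\e^{ikx}$, each mode evolves by the semigroup generated by $-H_k$, where $H_k:=-\kappa(\de_{yy}-\delta k^2)+iku(y)$ acts on $L^2(\T)$ (the constraint $\rho_0\in H$ is exactly the removal of the $k=0$ mode). Since $\Re\langle H_k\phi,\phi\rangle=\kappa\|\phi'\|_{L^2}^2+\kappa\delta k^2\|\phi\|_{L^2}^2\ge0$, the operator $H_k$ is maximally accretive, so by a quantitative Gearhart--Pr\"uss theorem (e.g.\ the Helffer--Sj\"ostrand version, or the semigroup bound extracted directly from a uniform resolvent estimate) it suffices to prove
\[
\| (H_k - i\lambda)^{-1} \|_{L^2(\T)\to L^2(\T)} \le C\,\kappa^{-\frac{\alpha}{\alpha+2}}, \qquad \forall\,\lambda\in\R,\ k\in\mathbb{Z}\setminus\{0\},
\]
with $C$ depending only on $\alpha$ and $\delta$. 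This yields $\|\e^{-tH_k}\|\le C\e^{-\eps\kappa^{\alpha/(\alpha+2)}t}$ uniformly in $k$, hence \eqref{eq:endecay} on $H$. The extremal regime is $|k|=1$ (larger $|k|$ mixes faster, and the term $\kappa\delta k^2$, when $\delta=1$, only helps).

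\emph{The resolvent estimate.} Given $\phi$ and $f:=(H_k-i\lambda)\phi$, pairing with $\overline\phi$ gives the two basic bounds
\[
\kappa\|\phi'\|_{L^2}^2\le\|f\|_{L^2}\|\phi\|_{L^2}, \qquad \big|\langle(ku(y)-\lambda)\phi,\phi\rangle\big|\le\|f\|_{L^2}\|\phi\|_{L^2}.
\]
The content is to combine these through the triadic structure of $u$. I would fix the critical scale $\ell_\ast$ by $3^{\ell_\ast}\sim\kappa^{-1/(\alpha+2)}$ and split $u=u_{<\ell_\ast}+u_{\ge\ell_\ast}$ with $u_{<\ell_\ast}=\sum_{\ell<\ell_\ast}3^{-\alpha\ell}\sin(3^\ell y)$. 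The tail is harmless: $\|u_{\ge\ell_\ast}\|_{L^\infty}\lesssim 3^{-\alpha\ell_\ast}\sim\kappa^{\alpha/(\alpha+2)}$, so (for $|k|=1$; for larger $|k|$ one lets $\ell_\ast$ depend on $k$) replacing $u$ by $u_{<\ell_\ast}$ in $H_k$ changes the operator by $O(\kappa^{\alpha/(\alpha+2)})$ in norm, which is of the targeted order. The remaining smooth shear $u_{<\ell_\ast}$ satisfies $\|u_{<\ell_\ast}'\|_{L^\infty}\lesssim 3^{(1-\alpha)\ell_\ast}\sim\kappa^{-(1-\alpha)/(\alpha+2)}$ and, at scale $3^{-\ell_\ast}$, still oscillates with full $C^\alpha$ amplitude, i.e.\ $\tfrac1{|I|}\int_I|u_{<\ell_\ast}-\overline u_I|\gtrsim 3^{-\alpha\ell_\ast}$ on every interval $I$ of length $\sim 3^{-\ell_\ast}$ (the ``$C^\alpha$ and no better'' property made effective, where the lacunary structure is used). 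One then runs the classical smooth-shear resolvent argument on $u_{<\ell_\ast}$: away from a thin neighbourhood of the critical points of $u_{<\ell_\ast}$ the slope is a definite fraction of $3^{(1-\alpha)\ell_\ast}$ and the transport term supplies the Couette-type damping rate $(\mathrm{slope})^{2/3}\kappa^{1/3}$, whereas on the bad set the diffusive Poincar\'e estimate coming from the first inequality above supplies the rate $\kappa\,3^{2\ell_\ast}$. Both rates equal $\kappa^{\alpha/(\alpha+2)}$ at the balance scale $\ell_\ast$ --- indeed $\tfrac13-\tfrac{2(1-\alpha)}{3(\alpha+2)}=\tfrac{\alpha}{\alpha+2}=1-\tfrac2{\alpha+2}$ --- so combining them yields $\|f\|_{L^2}\gtrsim\kappa^{\alpha/(\alpha+2)}\|\phi\|_{L^2}$, which is the desired resolvent bound.

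\emph{The main obstacle.} The difficulty is precisely that $u$ is nowhere differentiable, so the textbook shear-flow manipulation --- testing the resolvent equation against $u'\phi$ (or $(ku-\lambda)\phi$) to gain a second power of the transport operator --- is not directly available and must be replaced by the scale-truncated surrogate $u_{<\ell_\ast}$; this forces one to make fully quantitative, \emph{uniformly in the scale index}, both the upper bound on $\|u_{<\ell_\ast}'\|_{L^\infty}$ and a matching lower bound on the oscillation of $u$ at scale $3^{-\ell_\ast}$. A second, more delicate point is to treat the critical points of $u_{<\ell_\ast}$ without paying a logarithm: a black-box appeal to Theorem \ref{thm:firstthm} would reintroduce the factor $1+\log\kappa^{-1}$ of \eqref{eq:decayrate}, so one must exploit that the critical points of a triadic lacunary partial sum are well separated and non-degenerate on the scale $3^{-\ell_\ast}$, which lets the bad set be absorbed cleanly by the diffusive term. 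Everything else --- optimizing over $\ell_\ast$, controlling the tail perturbation, handling large $|k|$, and the passage from the uniform resolvent bound to \eqref{eq:endecay} with explicit constants --- is routine once these two points are in place.
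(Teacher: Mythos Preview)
This theorem is not proved in the present paper: it is quoted as \cite{WEI18}*{Theorem 5.1}, and the only information the paper provides about its proof is the one-line remark that it ``studies the resolvent of the linear operator appearing in \eqref{eq:shear}''. Consequently there is no ``paper's own proof'' to compare your attempt against; the paper's original contribution concerning this statement is the \emph{sharpness} direction (part 2 of Theorem~\ref{thm:main}), proved by the stochastic fluctuation--dissipation method, not the upper bound \eqref{eq:decayrate2} itself.

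As a standalone sketch of the resolvent approach your outline is coherent and the scaling heuristics are correct (your balance $3^{\ell_\ast}\sim\kappa^{-1/(\alpha+2)}$ does make the Couette rate and the diffusive Poincar\'e rate coincide at $\kappa^{\alpha/(\alpha+2)}$). You have also correctly isolated the genuine difficulty: replacing $u$ by a smooth truncation $u_{<\ell_\ast}$ and then controlling, uniformly in $\ell_\ast$, both the critical-point structure of $u_{<\ell_\ast}$ and a quantitative lower bound on its oscillation at scale $3^{-\ell_\ast}$. What remains a gap is precisely what you flag in your ``main obstacle'' paragraph --- the claim that the critical points of the lacunary partial sum are non-degenerate and well separated at scale $3^{-\ell_\ast}$, so that no logarithmic loss occurs, is asserted rather than proved, and this is the heart of the matter. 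Until that step is written out, the argument is a plausible roadmap rather than a proof.
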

The proofs of \cite{BCZ15}*{Theorem 1.1} and \cite{WEI18}*{Theorem 5.1} are carried out with completely different methods. The first one 
uses an adaptation of the so-called hypocoercivity technique \cite{Villani09}, while the second one studies the resolvent of  the linear operator 
appearing in \eqref{eq:shear}.

\subsection{Circular flows}
Another interesting case, analyzed in \cite{CZD19} from the point of view of enhanced dissipation, is 
that of circular flows on $\Omega=\R^2$. Assume that the velocity field
$\uu:\R^2\to \R^2$ is given by
\begin{equation}\label{def:u}
	\uu(x,y)=\left(x^2+y^2\right)^{q/2}\begin{pmatrix}
	-y\\ x
	\end{pmatrix},
\end{equation}	
with $q\geq 1$ an arbitrary fixed exponent.
In this way, the background velocity field generates a counter-clockwise rotating motion, with a shearing effect across streamlines.
By passing to polar coordinates $(r,\theta)\in (0,\infty)\times \T$ in  \eqref{eq:DDE}, we deduce that
\begin{align}\label{eq:cauchycirc}
\begin{cases}
\de_t \rho+ r^{q} \de_\theta \rho=\kappa\Delta \rho,\\ 
\rho(0,r,\theta)=\rho_0.
\end{cases}
\end{align}

\begin{figure}[h!]
  \centering
  \begin{subfigure}[b]{0.24\linewidth}
    \includegraphics[width=\linewidth]{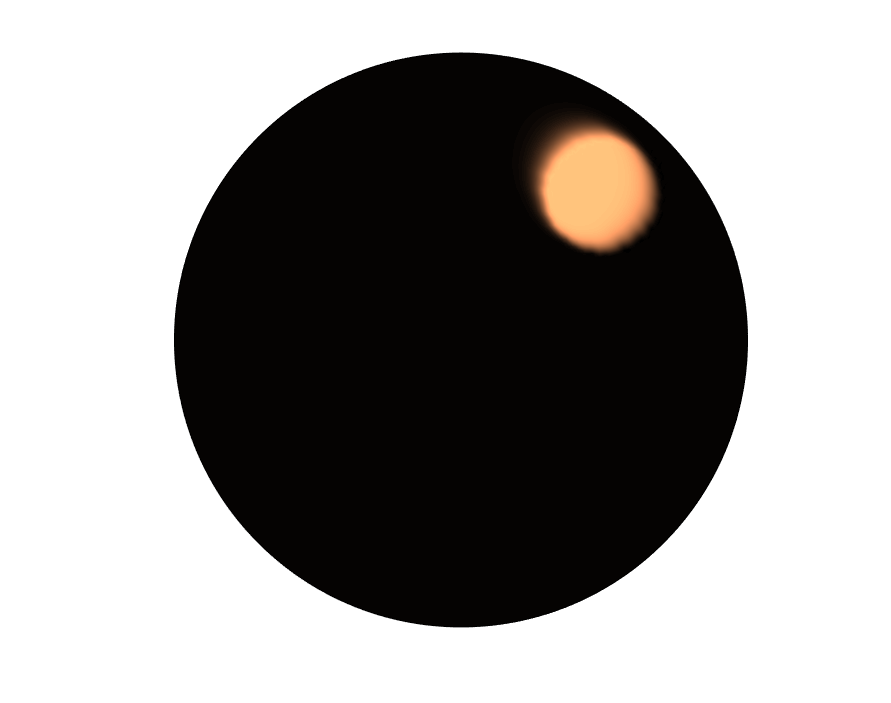}
%     \caption{Coffee.}
  \end{subfigure}
  \begin{subfigure}[b]{0.24\linewidth}
    \includegraphics[width=\linewidth]{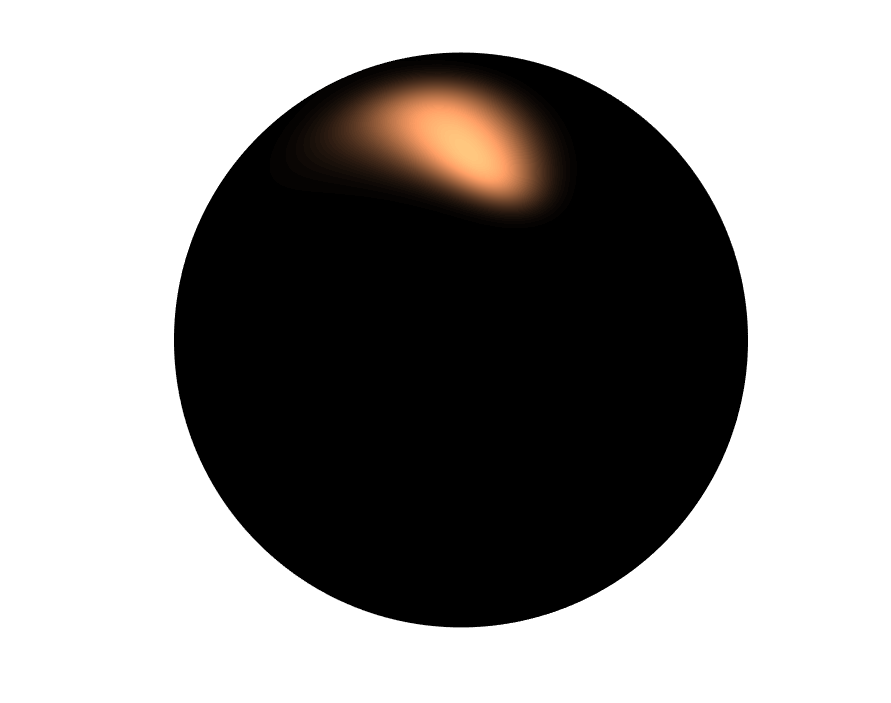}
%    \caption{More coffee.}
  \end{subfigure}
  \begin{subfigure}[b]{0.24\linewidth}
    \includegraphics[width=\linewidth]{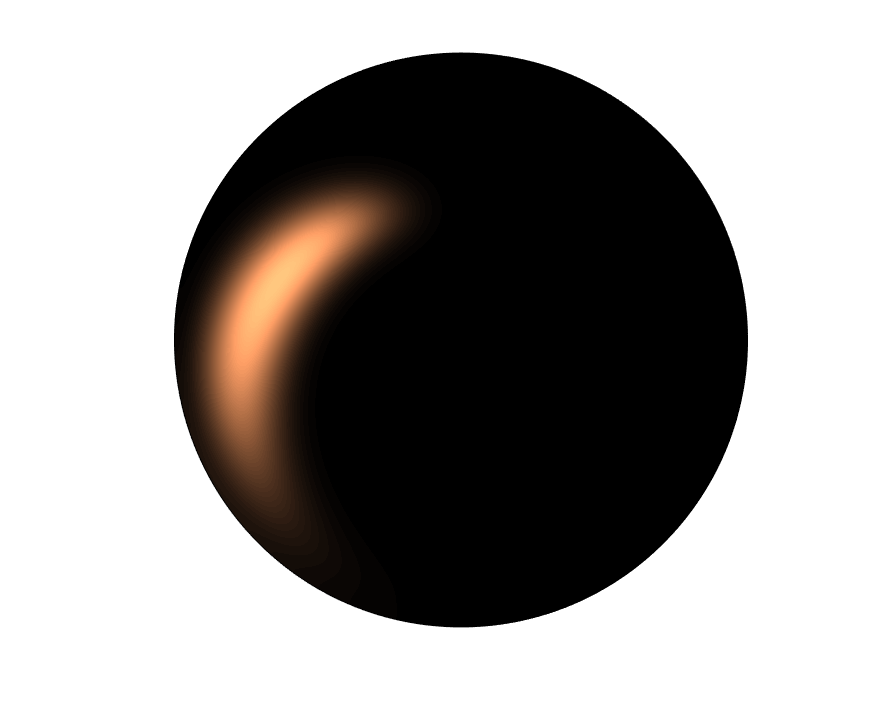}
%    \caption{Tasty coffee.}
  \end{subfigure}
  \begin{subfigure}[b]{0.24\linewidth}
    \includegraphics[width=\linewidth]{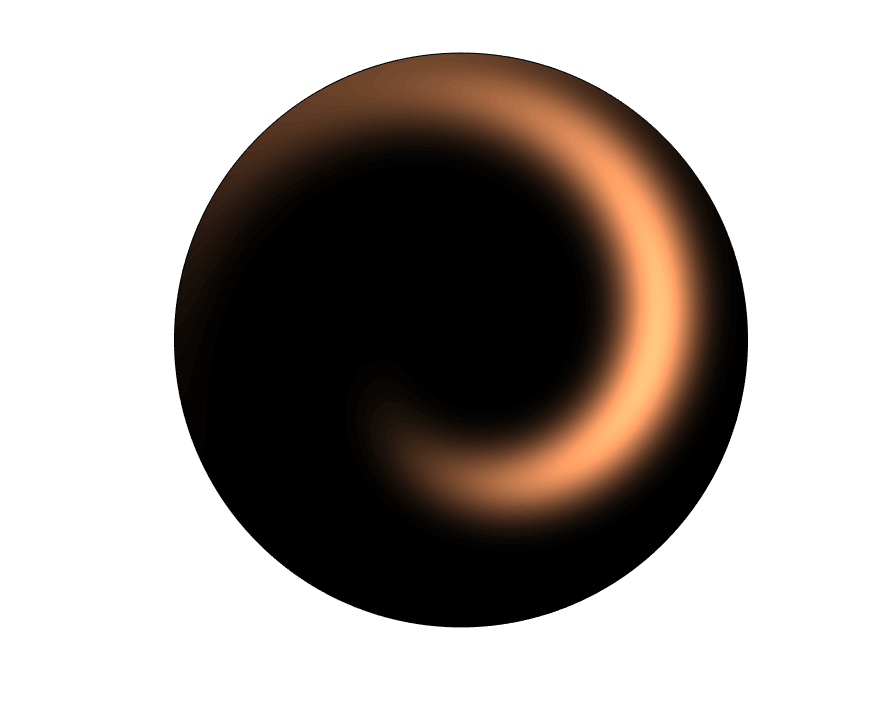}
%    \caption{Too much coffee.}
  \end{subfigure}
    \begin{subfigure}[b]{0.24\linewidth}
    \includegraphics[width=\linewidth]{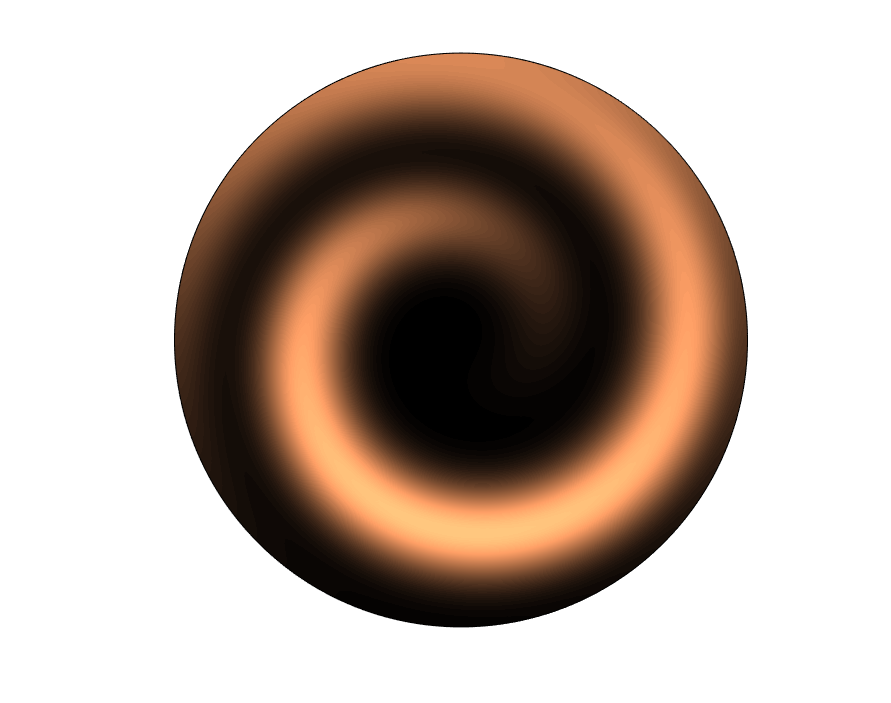}
%     \caption{Coffee.}
  \end{subfigure}
  \begin{subfigure}[b]{0.24\linewidth}
    \includegraphics[width=\linewidth]{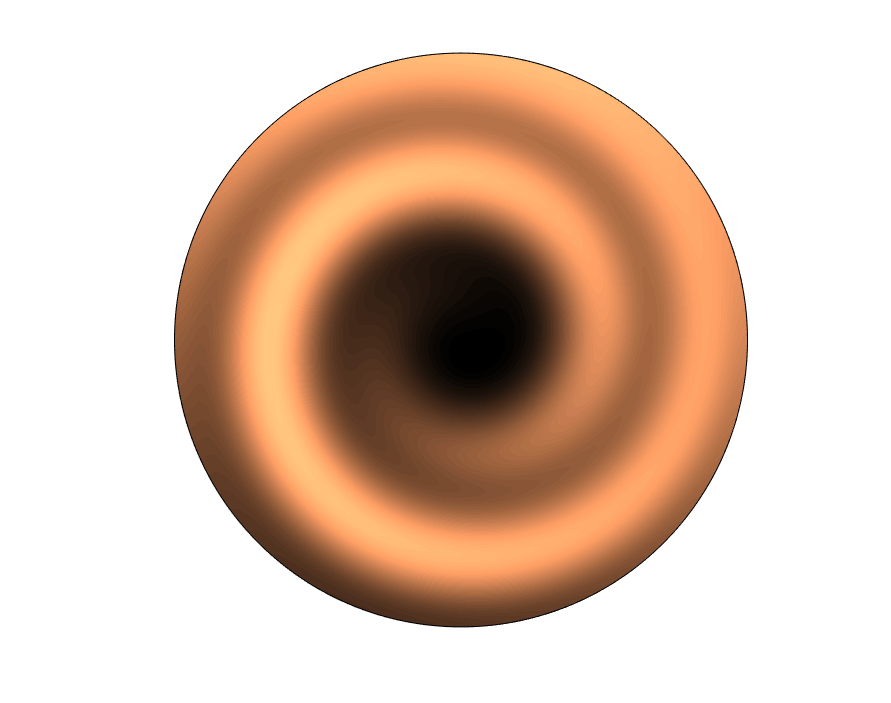}
%    \caption{More coffee.}
  \end{subfigure}
  \begin{subfigure}[b]{0.24\linewidth}
    \includegraphics[width=\linewidth]{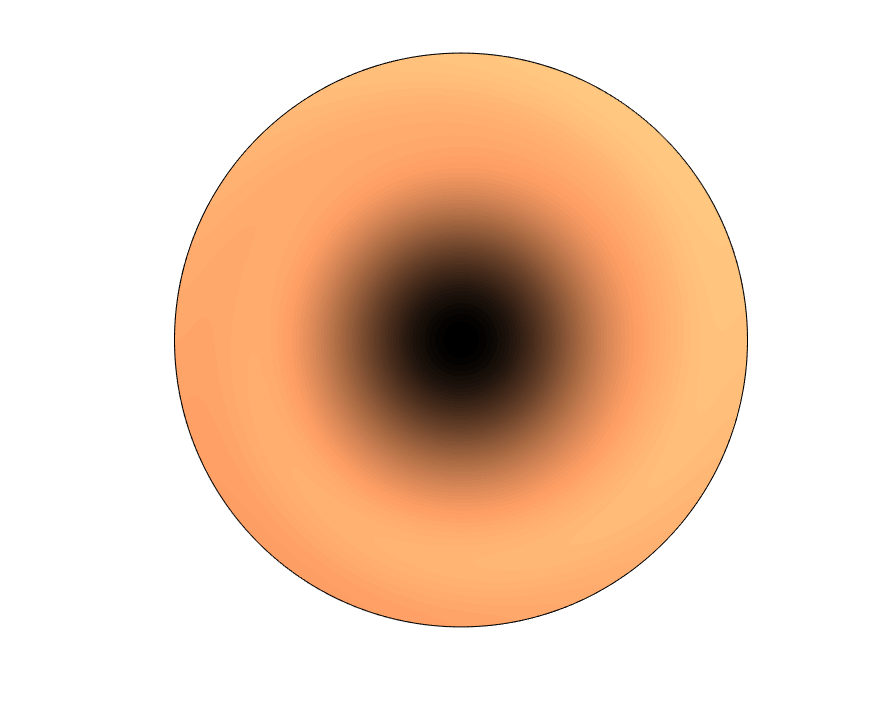}
%    \caption{Tasty coffee.}
  \end{subfigure}
  \begin{subfigure}[b]{0.24\linewidth}
    \includegraphics[width=\linewidth]{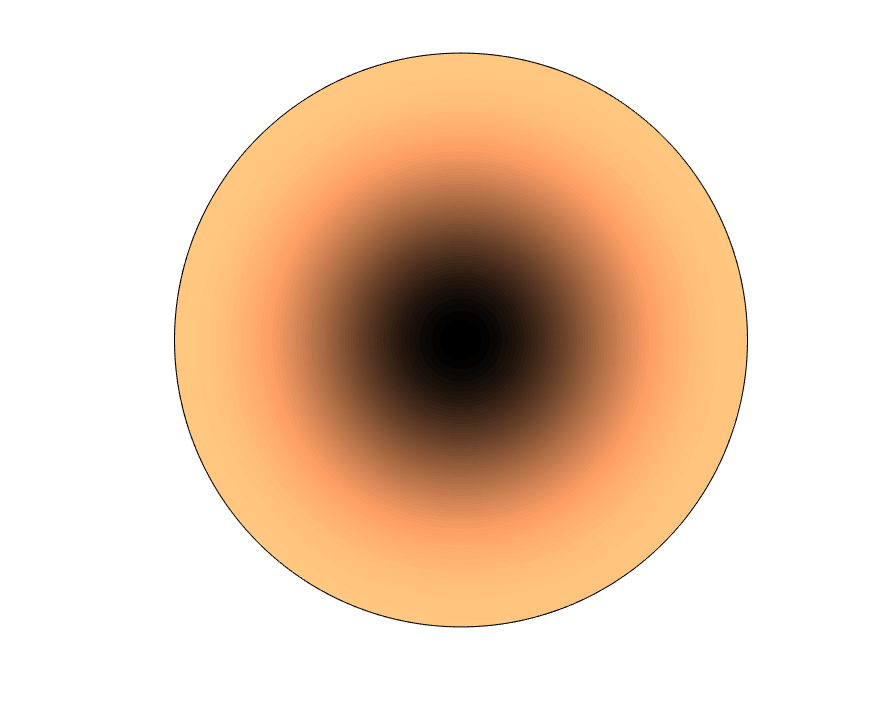}
%    \caption{Too much coffee.}
  \end{subfigure}
  \caption{The evolution of a drop of slightly diffusive ``cream"  radial stirred into a ``cup of coffee" with impermeable walls. Initially, pure advection is the dominant
effect.  As time progresses, the solution becomes radially symmetric. After this time, the cream simply diffuses across the (circular) streamlines. 
Here $q=2$ in \eqref{def:u}. Figure taken from \cite{CZD19}.}
  \label{fig:coffee}
\end{figure}

When referring to circular flows, $\Delta$ denotes the Laplace operator in polar coordinates, namely
\begin{align}\label{laplacepolar}
\Delta=\de_{rr}+\frac{1}{r} \de_r +\frac{1}{r^2}\de_{\theta\theta}.
\end{align}
In the subspace
\begin{align}\label{circH}
H=\left\{\rho\in L^2:\ \int_{\T}\rho(r,\theta)\dd \theta=0,\ \text{for a.e. }  r\geq 0\right\},
\end{align}
it can be proven that $\uu$ is diffusion enhancing. 
\begin{theorem}[\cite{CZD19}*{Theorem 1.1}]\label{thm:thirdthm}
Assume that $q\geq 1$. There exist $\eps\in(0,1)$ only depending on $q$ such that the enhanced diffusion rate on $H$ is
\begin{align}\label{eq:decayrate3}
r(\kappa) =\eps \frac{\kappa^{\frac{q}{q+2}}}{1+(q-1)|\ln\kappa|}.
\end{align}
\end{theorem}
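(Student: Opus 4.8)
The plan is to separate variables in the angle, reduce \eqref{eq:cauchycirc} to a uniform-in-frequency semigroup estimate for a family of non-selfadjoint radial operators, and then run a hypocoercivity argument whose only genuinely new feature, compared with the shear case, is the degeneracy of the rotation profile at $r=0$. Since the subspace $H$ in \eqref{circH} is invariant under \eqref{eq:cauchycirc} and consists of functions with vanishing angular mean, expand $\rho(t,r,\theta)=\sum_{k\neq0}\rho_k(t,r)\e^{\mathrm{i}k\theta}$; by \eqref{laplacepolar} each mode solves, on $L^2((0,\infty),r\,\dd r)$,
\begin{align}
\de_t\rho_k+\mathrm{i}k r^{q}\rho_k=\kappa\left(\de_{rr}+\tfrac1r\de_r-\tfrac{k^2}{r^2}\right)\rho_k=:-H_k\rho_k.
\end{align}
Since $H_k=H_{-k}^{*}$, by Plancherel in $\theta$ it suffices to prove $\|\e^{-tH_k}\|\leq C\e^{-r(\kappa)t}$ with $C,r(\kappa)$ as in \eqref{eq:decayrate3}, uniformly over integers $k\geq1$. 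The operator $H_k$ is accretive, so a uniform resolvent bound on the imaginary axis would also do (via the quantitative Gearhart--Pr\"uss theorem); I describe instead the energy route, which is the one that naturally produces the stated rate, logarithm included.

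Writing $u(r)=r^q$ for the rotation profile, so that $u'(r)=qr^{q-1}$, for each $k$ I would monitor a Lyapunov functional
\begin{align}
\Phi_k[\rho_k]=\|\rho_k\|^2+\alpha\kappa^{a}\|\de_r\rho_k\|^2+\alpha\kappa^{a}\|\tfrac kr\rho_k\|^2+\beta\kappa^{b}\,\Im\,\langle\de_r\rho_k,\,w(r)\rho_k\rangle,
\end{align}
with $w(r)$ comparable to $u'(r)^{-1}=(qr^{q-1})^{-1}$ on the bulk $\{r\gtrsim1\}$, and $\alpha,\beta\ll1$, $a,b\in(0,1)$ to be chosen. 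Differentiating $\Phi_k[\rho_k(t)]$ along \eqref{eq:cauchycirc} and integrating by parts: the selfadjoint part contributes the usual dissipation $-2\kappa(\|\de_r\rho_k\|^2+\|\tfrac kr\rho_k\|^2)$ together with its higher-order analogues, while the commutator of $\de_r$ with multiplication by $\mathrm{i}kr^q$ in the cross term produces an honestly damping contribution $\sim-\beta\kappa^{b}k\|\rho_k\|^2$ on the bulk (where $w\,u'\sim1$), against errors involving $\kappa$, $u''/u'^{2}$ and the centrifugal weight. For a suitable choice of $a,b$ and of $\alpha,\beta$ (so that the error terms are absorbed by the good ones) this yields, on the bulk, $\tfrac{\dd}{\dd t}\Phi_k\leq-c\,\kappa^{1/3}k^{2/3}\Phi_k$, which already beats \eqref{eq:decayrate3}; this is the easy regime.

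The main obstacle is the degeneracy at the origin: for $q>1$ one has $u'(r)=qr^{q-1}\to0$ as $r\to0$, so the cross term provides no damping near $r=0$ and the weight $w\sim u'^{-1}$ cannot be carried down to the origin. I would handle this by a dyadic decomposition of $(0,1]$ into annuli $A_j=\{r\sim2^{-j}\}$ for $0\leq j\lesssim|\ln\kappa|$, localizing $\rho_k$ by a partition of unity $\{\chi_j\}$. On $A_j$ the shear is $u'\sim2^{-j(q-1)}$, so repeating the functional argument with the weight frozen at that scale — and using that the centrifugal potential $\kappa k^2/r^2\gtrsim\kappa\,2^{2j}$ is large and positive there, which suppresses the scalar in the core — gives local decay at rate $\sim\kappa^{1/3}(2^{-j(q-1)})^{2/3}=\kappa^{1/3}2^{-2j(q-1)/3}$. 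This stays effective only while this local mechanism dominates pure diffusion, i.e. down to the cutoff $2^{-j}\sim\kappa^{1/(q+2)}$, at which the local rate has decreased to exactly $\kappa^{q/(q+2)}$; inside that core the centrifugal gap $\kappa k^2r^{-2}\gtrsim\kappa^{q/(q+2)}$ by itself closes the estimate. The slowest of the $O(|\ln\kappa|)$ scales thus runs at rate $\kappa^{q/(q+2)}$, and the accumulation of the $O(|\ln\kappa|)$ localization errors created by $\de_r\chi_j$ — controlled using the smoothness of $u$ and $q\geq1$, and weighted by the order $q-1$ of vanishing of $u'$ at $r=0$ — produces precisely the factor $1+(q-1)|\ln\kappa|$ in \eqref{eq:decayrate3}. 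Making this multi-scale bookkeeping rigorous, alongside the careful treatment of the coordinate singularity at $r=0$ (which is exactly where the analysis departs from the shear case of Theorem~\ref{thm:firstthm}), is the technical heart of the proof; the non-selfadjointness itself is routine once the functional is chosen.

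Finally, the stochastic viewpoint of the present paper suggests a complementary route: combining \eqref{eq:FDR} with the energy equality \eqref{eq:enreq} gives $\|\rho(t)\|_{L^2}^2=\|\rho_0\|_{L^2}^2-\int_\Omega\Var(\rho_0(\bxi_{t,0}(\bx)))\,\dd\bx$, so it would suffice to quantify how fast the backward trajectories \eqref{eq:backtraj} decorrelate in the angular variable because of the $r$-dependent rotation speed $r^q$ — the same differential-rotation mechanism — and to turn a lower bound on that variance into a differential inequality for $\|\rho(t)\|_{L^2}^2$. This is, in spirit, dual to the saturation statements proved in the body of the paper.
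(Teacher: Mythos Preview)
This theorem is not proved in the present paper: it is quoted from \cite{CZD19} (the citation in the theorem header makes this explicit), and the only remark the paper offers is that ``the proof of this theorem is also carried out using hypocoercivity.'' There is therefore no proof in the paper to compare your proposal against. The body of the paper proves the \emph{sharpness} of the rate \eqref{eq:decayrate3} (Theorem~\ref{thm:main}, part 3, in \S\ref{sec:circ}), not the rate itself.

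That said, your outline is broadly in the right spirit: mode-by-mode reduction in $\theta$, a hypocoercive functional with a cross term generating damping via $u'$, and a multi-scale treatment of the degeneracy of $u'(r)=qr^{q-1}$ at the origin is a plausible sketch of how \cite{CZD19} proceeds, and it correctly identifies the scale $r\sim\kappa^{1/(q+2)}$ at which the mechanism saturates to the exponent $q/(q+2)$. Two points of caution: your dyadic cutoff argument and the claim that the localization errors accumulate to exactly the factor $1+(q-1)|\ln\kappa|$ are asserted rather than derived, and in practice the logarithm in hypocoercivity estimates often comes from a single time-scale matching step rather than from summing $O(|\ln\kappa|)$ scales, so that part of the sketch may not reflect the actual mechanism in \cite{CZD19}. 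Your final paragraph, proposing to prove \eqref{eq:decayrate3} by a \emph{lower} bound on the variance in \eqref{eq:FDR}, is interesting but is not what this paper does; here the stochastic representation is used only for \emph{upper} bounds on the dissipation, yielding sharpness rather than the decay estimate.
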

Notice that, again, when $q=1$ there is no log-correction. The proof of this theorem is also carried out using hypocoercivity. However,
on the whole space $\R^2$, without a Poincar\'e inequality, it is not so clear that one should obtain an exponential decay estimate
of the type \eqref{eq:endecay}.

\subsection{The main result}
The main result of this paper is that the powers of $\kappa$ appearing in enhanced diffusion rates of 
Theorems \ref{thm:firstthm}--\ref{thm:thirdthm} are sharp, up to  the possible logarithmic corrections.

\begin{theorem}\label{thm:main} The enhanced diffusion rates of 
Theorems \ref{thm:firstthm}, \ref{thm:secondthm}  and \ref{thm:thirdthm} are sharp. Specifically:
\begin{enumerate}
\item[1.] Let $u$ be a time-independent $ C^{n+1}(\Omega)$  shear flow with a \emph{finite} number of critical points, 
where  $n\in \N$ denotes the maximal order of vanishing of $u'$ at the critical points.
There exist initial data $\rho_0\in H$ such that $r(\kappa)\sim\kappa^\frac{n}{n+2}$ is the 
best possible enhanced diffusion rate.
\vspace{2mm}

\item[2.] Let $u$ be a time-independent $C^\alpha$ shear flow. There exist initial data $\rho_0\in H$ 
such that the rate $r(\kappa)\sim\kappa^\frac{\alpha}{\alpha+2}$ is the 
best possible enhanced diffusion rate. In the case of Lipschitz time-independent shear
flows, $r(\kappa)\sim\kappa^{1/3}$  is the 
best possible enhanced diffusion rate. 
\vspace{2mm}

\item[3.] Let $\uu$ be the time-independent  circular flow given by \eqref{def:u} with  $q\geq 1$.
There exist initial data $\rho_0\in H$ such that the enhanced diffusion rates of 
Theorem \ref{thm:thirdthm} cannot be improved, i.e. $r(\kappa)\sim\kappa^\frac{q}{q+2}$ is the 
best possible enhanced diffusion rate.
\end{enumerate}
\end{theorem}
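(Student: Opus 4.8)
The strategy is to show \emph{lower bounds} on $\|\rho(t)\|_{L^2}^2$ by exhibiting, for each flow, a family of initial data $\rho_0\in H$ (possibly $\kappa$-dependent) that the advection cannot decorrelate quickly. The bridge between the enhanced-diffusion estimate \eqref{eq:endecay} and the dissipation is the elementary observation, to be stated as Lemma~\ref{lem:crucial}: if \eqref{eq:endecay} holds with rate $r(\kappa)$, then integrating \eqref{eq:enreq} gives $\kappa\int_0^t\|\nabla\rho(s)\|_{L^2}^2\,\rmd s \geq \frac{1}{2}(1-C\e^{-r(\kappa)t})\|\rho_0\|_{L^2}^2$, so a \emph{lower} bound on $r(\kappa)$ forces the dissipation at time $t\sim 1/r(\kappa)$ to be a definite fraction of $\|\rho_0\|_{L^2}^2$. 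Conversely, by the fluctuation--dissipation identity \eqref{eq:FDR}, the dissipation up to time $t$ is exactly $\tfrac12\int_\Omega \mathrm{Var}(\rho_0(\bxi_{t,0}(\bx)))\,\rmd\bx$, which one can bound \emph{above} by controlling how far the stochastic backward trajectories $\bxi_{t,0}(\bx)$ spread; if this spread is small on the scale on which $\rho_0$ varies, the variance — hence the dissipation — is $o(\|\rho_0\|_{L^2}^2)$. Matching the two inequalities yields a contradiction unless $r(\kappa)$ is no faster than the claimed power, which is the sharpness assertion.

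Concretely, for the shear case I would take $\rho_0(x,y)=g(y)\sin(x)$ (or $\e^{ix}$) with $g$ a bump localized near a critical point $y_{crit}$ of maximal order $n$, on a $y$-scale to be chosen as a power of $\kappa$. The backward trajectory has $Y_s = y + \sqrt{2\kappa}\,W_s$ (pure Brownian motion in $y$, since $\uu$ has no $y$-component) and $X_s$ obtained by integrating $u(Y_\tau)$; the phase $\sin$ of the first coordinate is shifted by $\int_0^t u(Y_s)\,\rmd s$, which near $y_{crit}$ is $u(y_{crit})t$ plus a fluctuation of size $\sim t\cdot|Y-y_{crit}|^n \sim t(\kappa t)^{n/2}$ after accounting for the Brownian excursion of size $(\kappa t)^{1/2}$. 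Requiring this de-phasing to be $O(1)$, together with $g$ being supported on a $y$-interval of width $\gtrsim(\kappa t)^{1/2}$ so that the Brownian motion has not yet left the support, pins $t\sim \kappa^{-n/(n+2)}$; a careful estimate of $\mathrm{Var}(\rho_0(\bxi_{t,0}))$ for $t$ a small constant times this shows the dissipation is still a small fraction of $\|\rho_0\|_{L^2}^2$, so by Lemma~\ref{lem:crucial} the decay rate cannot exceed $\kappa^{n/(n+2)}$. The Hölder (Weierstrass) case is the same computation with $|u(y)-u(y')|\lesssim|y-y'|^\alpha$ replacing the critical-point expansion, giving $t\sim\kappa^{-\alpha/(\alpha+2)}$; the Lipschitz case is $\alpha=1$. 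For the circular flow, in polar coordinates $\rho_0=g(r)\e^{i\theta}$ with $g$ supported on an annulus near some $r_0>0$ of radial width a power of $\kappa$; the radial coordinate does a Brownian-type excursion (with the drift from the polar Laplacian, which is lower order and harmless away from the origin), the angular phase de-synchronizes by $\int_0^t r_s^q\,\rmd s \approx r_0^q t + O\big(t\,r_0^{q-1}(\kappa t)^{1/2}\big)$, and the same balance produces $t\sim\kappa^{-q/(q+2)}$.

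The main obstacle, and the place where real care is needed, is the \emph{quantitative upper bound on the variance} $\int_\Omega\mathrm{Var}(\rho_0(\bxi_{t,0}(\bx)))\,\rmd\bx$ at the critical time. One must show not merely that a typical trajectory stays coherent, but that the $L^2$-averaged variance is genuinely small — this requires controlling the law of the phase increment $\Phi_t := \int_0^t u(Y_s)\,\rmd s$ (or its circular analogue) finely enough: e.g. bounding $\E|\Phi_t - \E\Phi_t|^2$ and the probability that $Y_s$ wanders out of the support of $g$, and then assembling these via $\mathrm{Var}(\rho_0(\bxi))\le \E|\rho_0(\bxi)-\rho_0(\bar\bxi)|^2$ for a suitably chosen comparison. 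Near critical points the bound $|u(y)-u(y_{crit})|\sim|y-y_{crit}|^n$ must be used with the right constants, and one must check the argument is uniform in $\kappa$ and insensitive to the behavior of $u$ far from the chosen critical point (this is where having a \emph{finite} number of critical points, and choosing $g$ with small support, matters). A secondary technical point is the circular case on $\R^2$: one must keep the trajectories away from $r=0$ (where $r^q$ and the Laplacian coefficients degenerate or blow up) for the whole relevant time interval, which again follows from localizing $g$ away from the origin on a scale that dominates the Brownian spread $(\kappa t)^{1/2}$ — consistent with $r_0$ fixed and $t\sim\kappa^{-q/(q+2)}$ since $\kappa t\to 0$.
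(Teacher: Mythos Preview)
Your strategy is exactly the paper's: use the FDR \eqref{eq:FDR} to bound the cumulative dissipation from above by the spread of backward trajectories evaluated on a localized datum, then invoke Lemma~\ref{lem:crucial} to conclude that $r(\kappa)$ cannot exceed the claimed power. For parts 1 and 2 your outline is correct and matches the paper in every essential respect: the datum $\rho_0(x,y)=\varphi(y)\sin x$ with $\varphi$ a bump of width $\kappa^{1/(n+2)}$ (resp.\ $\kappa^{1/(\alpha+2)}$) near a critical point, the splitting into a near/far region, and the moment bound on $\int_0^t u(Y_\tau)\,\rmd\tau$ via $|u(y)-u(y_{crit})|\lesssim |y-y_{crit}|^n$ (resp.\ the H\"older condition) are all as in the paper.

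For part 3, however, your choice of a \emph{fixed} $r_0$ does not give the right time-scale. With $r_0$ fixed, your own fluctuation estimate $O\big(t\,r_0^{q-1}(\kappa t)^{1/2}\big)$ for the angular phase becomes $O(1)$ at $t\sim\kappa^{-1/3}$, \emph{independently of $q$}; this only yields $r(\kappa)\lesssim\kappa^{1/3}$, which for $q>1$ is strictly weaker than the sharpness bound $r(\kappa)\lesssim\kappa^{q/(q+2)}$ you need. The point is that the circular flow has its ``critical point'' at $r=0$ (where the angular speed $r^q$ vanishes), so to see the slow de-phasing you must localize the datum \emph{near} the origin, not at a fixed $r_0$. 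The paper takes $r_0\sim\kappa^{1/(q+2)}$ (an annulus $[2\kappa^\beta,4\kappa^\beta]$ with $\beta=1/(q+2)$): this is small enough that the angular de-phasing $t\,r_0^{q-1}(\kappa t)^{1/2}$ balances at $t\sim\kappa^{-q/(q+2)}$, yet large enough to stay away from the polar-coordinate singularity and to accommodate the radial Brownian spread $(\kappa t)^{1/2}\sim\kappa^{1/(q+2)}$. Once you make this change the rest of your sketch (controlling the drift term $\kappa/R$ and the noise $\sqrt{2\kappa}/R\,\hat{\rmd}B$ via moment bounds on $R$, and the far-region via Chebyshev) goes through as in the paper.
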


The form of the initial datum $\rho_0$ is fairly simple: in the case of shear flows it is has a non-trivial smooth dependence on $x$, a (at least) Lipschitz dependence on $y$
and it is concentrated near a critical point of maximal order.  In the radial flow case, the datum is concentrated in an annulus around the origin. 
We remark that the proof of sharpness of Theorem \ref{thm:secondthm} does not use the particular form of the shear, rather only its regularity. Therefore, we infer the more general result 
that $\kappa^{1/3}$ is the best possible enhanced
diffusion rate for a time-independent Lipschitz shear flow, and  $\kappa^\frac{\alpha}{\alpha+2}$   is the best possible enhanced
diffusion rate for a time-independent $C^\alpha$ shear flow.
In particular, it also establishes sharpness of the rates of Theorem \ref{thm:firstthm} for $n=1$, simply taking $\alpha=1$.  
\vspace{2mm}

\noindent \textbf{Idea of the proof.} 
To explain the key idea, consider for simplicity \eqref{eq:shear} for the shear $u(y)=y^n$ and $\delta=0$. In principle, this is 
a good approximation near a critical point. To obtain information on the time decay of the energy, we upper bound the cumulative dissipation,
which we identify by means of the Lagrangian fluctuation dissipation relation \eqref{eq:FDR} as the variance of the initial data sampled by backwards noisy trajectories \eqref{eq:backtraj}.
For nice initial conditions $\rho_0$, this variance is dominated  by the behavior of trajectories near the critical points, where they disperse slowest. 
Explicitly writing \eqref{eq:backtraj} for initial data $x=y=0$ (i.e. for particle starting in the 
critical point), we find from  \eqref{eq:backtraj} (see e.g. \eqref{eq:xi}--\eqref{eq:zeta} in what follows) that
\begin{align}
 X_{t,s}(0,0)  &=  (-1)^{n+1}(2\kappa)^{n/2}\int_s^t W_\tau^n \rmd \tau, \\
  Y_{t,s}(0,0)  &= \sqrt{2\kappa} W_s.
\end{align}
Hence, by \eqref{eq:FDR}, the dissipation is controlled for short times near the critical point by the variance
\begin{align}
 \E|X_{t,0}(0,0)|^2  &=  c_n \kappa^{n}t^{n+2}.
\end{align}
In particular, the time-scale in \eqref{eq:decayrate} appears, up to logarithmic corrections, 
when computing the variance of $X_{t,0}$. In other words, a particle
starting in the stationary point $y=0$ takes a time proportional to $1/\kappa^\frac{n}{n+2}$ to reach, in expectation, a distance of
order 1 from it. This effect is solely due to the noise, although the time depends on the nature of the critical point in question: in particular,
it is longer in the case the critical point is flatter.
In the case of radial flows, the situation is analogous, although the proof is more complicated because we cannot solve explicitly 
for the backward trajectories. Nonetheless, sharpness of the rates of Theorem \ref{thm:thirdthm}  can be deduced in this case as well,
up to the log-correction.
\vspace{2mm}

\noindent \textbf{Outline of the paper.} 
The rest of  the paper is devoted to make the heuristic ideas outlined above mathematically sound. 
The next \S\ref{sec:sec2} contains a derivation of a general criterion to prove sharpness of enhanced dissipation
rates, based on the Lagrangian fluctuation-dissipation relation \eqref{eq:FDR}. Section \S \ref{sec:main} contains the proof of Theorem \ref{thm:main} and is divided into subsections covering each part.

\section{Energy dissipation and enhanced diffusion time-scales}\label{sec:sec2}
In this section, we use \eqref{eq:FDR} to properly estimate the dissipation of \eqref{eq:shear} in terms of the initial datum and the
statistics of the backward trajectories \eqref{eq:backtraj}, see Lemma \ref{lem:upperFDR} below. 
Moreover, we prove a simple (but very effective) general criterion 
that relates upper bounds on the dissipation with enhanced diffusion rates (c.f. Lemma \ref{lem:crucial}).

\subsection{The Lagrangian fluctuation dissipation relation (FDR)} \label{secFDR}

For the sake of completeness, we derive the Lagrangian FDR in the setting of shear flows below.  A more general derivation and discussion can be found in \cite{drivas2017lagrangian}, its modification for domains with solid boundary in \cite{drivas2017lagrangian2} and its implications for Rayleigh-B\'{e}nard turbulence in \cite{eyink2018lagrangian}. To begin with, we note that the stochastic trajectories $\bxi_{t,s}(\bx)=( X_{t,s}(x,y),  Y_{t,s}(x,y)  )$, whose (backwards) infinitesimal generator corresponds to the operator 
 $\boldsymbol{u}\cdot \nabla   -\kappa \Delta_\delta $, satisfy \eqref{eq:backtraj}.
 For any $f$ which is spatially $C^2$ and differentiable in time, the backward It\^{o} formula gives
\begin{align*}
\hat{\rmd} f(s, \bxi_{t,s}(\bx))  &= \big( \partial_t f + \boldsymbol{u}\cdot \nabla  f - \kappa \Delta_\delta f \big)|_{(s,\bxi_{t,s}(\bx))} \rmd s\\
&\qquad\qquad\qquad  + \sqrt{2\kappa}\ \hat{\rmd} \begin{pmatrix}B_s\\ W_s\end{pmatrix}\cdot \nabla_\delta f|_{(s,\bxi_{t,s}(\bx))},
\end{align*}
where $\nabla_\delta = (\delta \partial_x , \partial_y)$.
Taking $f= \rho(t,\bx)$ and using the equations of motion, we have upon integration
\be\label{itoform}
\rho(t,\bx) =  \rho_0(\bxi_{t,0}(\bx) ) + \sqrt{2\kappa}\int_0^t \hat{\rmd} \begin{pmatrix}B_s\\ W_s\end{pmatrix}\cdot \nabla_\delta\rho(s,{\bxi_{t,s}(\bx)}).
\ee
Using the fact that the final integral is a backwards martingale, taking the expectation of the above yields the Feynman--Kac formula \eqref{feykac}. Using the identity \eqref{feykac}, squaring \eqref{itoform}, taking expectation and using It\^{o} isometry we find
\begin{align*}
 {2\kappa}\int_0^t \mathbb{E}|\nabla_\delta\rho(s,{\bxi_{t,s}(\bx)})|^2 \rmd s &=\mathbb{E}\left| \rho_0(\bxi_{t,0}(\bx)) - \mathbb{E}\left( \rho_0(\bxi_{t,0}(\bx)) \right)\right|^2\\
 &=: {\rm Var} \left( \rho_0(\bxi_{t,0}(\bx) ) \right).
\end{align*}
Integrating in space over $\Omega$ and using volume preservation of the stochastic flow $\bxi_{t,s}$, we find
\be\label{eq:FDR1}
\kappa \int_0^t  \|\nabla_\delta \rho(s)\|_{L^2}^2 \rmd s =\frac{1}{2} \int_{\Omega} {\rm Var}\left(\rho_0(\bxi_{t,0}(\bx)\right)\rmd \bx.
\ee
The following lemma provides an upper bound on the dissipation that is tailored to the backward trajectories.
\begin{lemma}\label{lem:upperFDR} Let 
$\Omega_0\subseteq \Omega$ be a subdomain of $\Omega$.  Then we have the following bound on the variance
\begin{align}\nonumber
\int_{\Omega_0} {\rm Var}\left(\rho_0(\bxi_{t,0}(\bx)\right)\rmd \bx &\leq  \frac{\| \de_x\rho_0\|^2_{L^\infty(\Omega_0)}}{2}\int_{\Omega_0}\E ^{1,2}\left|X_{t,0}^{(1)}(\bx)- X_{t,0}^{(2)}(\bx)\right|^2\rmd \bx\\
&\qquad +   \frac{\| \de_y\rho_0\|^2_{L^\infty(\Omega_0)}}{2}\int_{\Omega_0}\E ^{1,2}\left|Y_{t,0}^{(1)}(\bx)- Y_{t,0}^{(2)}(\bx)\right|^2\rmd \bx
\end{align}
where $\E ^{1,2}$ represents the expectation over trajectories $(X_{t,0}^{(1)}, Y_{t,0}^{(1)})$ and $(X_{t,0}^{(2)}, Y_{t,0}^{(2)})$ satisfying \eqref{eq:backtraj} with independent (time-reversed) two-dimensional Brownian motions $(B^{(1)}_s,W^{(1)}_s)$ and $(B^{(2)}_s,W^{(2)}_s)$.
\end{lemma}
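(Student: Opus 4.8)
The plan is to combine the representation of the variance of an $L^2$ random variable as half the expected squared difference of two independent copies of it with the mean value theorem for $\rho_0$ along coordinate directions. Fix $\bx\in\Omega_0$ and put $Z:=\rho_0(\bxi_{t,0}(\bx))$ and $Z^{(j)}:=\rho_0(X^{(j)}_{t,0}(\bx),Y^{(j)}_{t,0}(\bx))$, $j=1,2$, where the two backward trajectories are driven by the independent pairs $(B^{(j)},W^{(j)})$ as in the statement. Since $Z^{(1)},Z^{(2)}$ are independent and each distributed as $Z$, expanding the square and using independence gives $\E^{1,2}|Z^{(1)}-Z^{(2)}|^2=2\E|Z|^2-2|\E Z|^2=2\,{\rm Var}(Z)$, that is,
\begin{equation}
{\rm Var}\big(\rho_0(\bxi_{t,0}(\bx))\big)=\frac12\,\E^{1,2}\big|\rho_0(X^{(1)}_{t,0}(\bx),Y^{(1)}_{t,0}(\bx))-\rho_0(X^{(2)}_{t,0}(\bx),Y^{(2)}_{t,0}(\bx))\big|^2.
\end{equation}

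The next step is to bound the integrand pathwise. Joining the two endpoints by a path that moves first in the $x$-direction and then in the $y$-direction and applying the fundamental theorem of calculus,
\begin{equation}
\big|\rho_0(a_1,b_1)-\rho_0(a_2,b_2)\big|\le \|\de_x\rho_0\|_{L^\infty(\Omega_0)}\,|a_1-a_2|+\|\de_y\rho_0\|_{L^\infty(\Omega_0)}\,|b_1-b_2|,
\end{equation}
applied with $(a_j,b_j)=(X^{(j)}_{t,0}(\bx),Y^{(j)}_{t,0}(\bx))$. Squaring, using an elementary inequality to split the square into an $x$-part and a $y$-part, taking $\E^{1,2}$, integrating over $\bx\in\Omega_0$, and exchanging expectation and space integral (legitimate by Tonelli, since the integrand is nonnegative) then yields, after collecting the elementary constants, the claimed bound.

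The one point that needs care is the validity of the mean value inequality with the stated $L^\infty(\Omega_0)$-norms: the points $\bxi^{(1)}_{t,0}(\bx)$ and $\bxi^{(2)}_{t,0}(\bx)$ need not lie in $\Omega_0$, so a priori the coordinatewise segments joining them are controlled by $\|\de_\bullet\rho_0\|_{L^\infty(\Omega)}$ rather than $\|\de_\bullet\rho_0\|_{L^\infty(\Omega_0)}$. This is immaterial in all the applications below: either $\Omega_0=\Omega$, or $\spt\rho_0\subseteq\Omega_0$ so that the two norms coincide, or $\rho_0$ is globally Lipschitz on $\Omega$. One should likewise read $|a_1-a_2|$ (and $|b_1-b_2|$) as the distance on $\T$ in the periodic direction(s), which only decreases the right-hand side. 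Beyond this bookkeeping there is no genuine analytic difficulty; the content of the lemma is the clean reduction of the scalar's cumulative dissipation to the spreading of the two independent backward trajectories, $X^{(1)}_{t,0}-X^{(2)}_{t,0}$ and $Y^{(1)}_{t,0}-Y^{(2)}_{t,0}$, which is exactly the quantity that can then be estimated directly from \eqref{eq:backtraj}.
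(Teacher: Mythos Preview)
Your argument is correct and follows exactly the paper's approach: rewrite the variance as half the expected squared difference of two i.i.d.\ copies, apply the mean value inequality coordinatewise, then use Young's inequality to split the square. Your caveat about $\|\de_\bullet\rho_0\|_{L^\infty(\Omega_0)}$ versus $\|\de_\bullet\rho_0\|_{L^\infty(\Omega)}$ is a fair point that the paper also glosses over, and, as you note, is immaterial in the applications.
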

\begin{proof}
Note that for 
any random variable $\tilde{X}$, ${\rm Var}(\tilde{X}) =\frac{1}{2}\mathbb{E}  |\tilde{X}^{(1)}-\tilde{X}^{(2)}|^2$ where $\tilde{X}^{(1)},
\tilde{X}^{(2)}$ are two independent random variables identically distributed as $\tilde{X}$.  Therefore, the FDR \eqref{eq:FDR1} 
can be expressed as
\begin{align}\nonumber
\int_{\Omega_0} &{\rm Var}\left(\rho_0(\bxi_{t,0}(\bx)\right)\rmd \bx 
= \frac{1}{4}\int_{\Omega_0}\mathbb{E}\left|\rho_0(\bxi_{t,0}^{(1)}(\bx))- \rho_0(\bxi_{t,0}^{(2)}(\bx))\right|^2\rmd \bx\\ \nonumber
&\leq \frac{1}{4}\int_{\Omega_0}\mathbb{E}\left[ \| \de_x\rho_0\|_{L^\infty(\Omega_0)}\left|X_{t,0}^{(1)}(\bx)- X_{t,0}^{(2)}(\bx) \right|  + \| \de_y\rho_0\|_{L^\infty(\Omega_0)}\left|Y_{t,0}^{(1)}(\bx)- Y_{t,0}^{(2)}(\bx) \right| \right]^2\rmd \bx.
\end{align}
The result follows upon applying Young's inequality.
\end{proof}

\subsection{Bound on dissipation constrains enhanced diffusion rate}

To prove enhanced diffusion as in Definition \ref{def:enhanced} one needs to prove a uniform \emph{lower bound} on the dissipation 
by a proper time-scale. In other words, small scales need to be created by mixing in a time proportional to the enhanced diffusion time-scale.
The proof of sharpness of such time-scales relies instead on \emph{upper bounds} on the dissipation.

\begin{lemma}\label{lem:crucial}
Let the velocity field $\uu$ be diffusion enhancing on a subspace $H\subset L^2$ at rate $r(\kappa)$. 
Assume that for some $\rho_0\in H$, some constant $C\geq1$, some continuous increasing function $f:[0,\infty)\to[0,\infty)$ with $f(0)=0$, and some 
increasing function $\xi:(0,\kappa_0]\to (0,1)$, the solution $\rho$ to \eqref{eq:DDE} with initial datum $\rho_0$ satisfies
\begin{align}\label{eq:updis}
\kappa \int_0^t \|\nabla_\delta \rho(s)\|_{L^2}^2 \rmd s\leq f(\xi(\kappa) t)\|\rho_0\|^2_{L^2},\qquad \forall t\leq\frac{1}{C\xi(\kappa)},
\end{align}
for every $\kappa\in(0,\kappa_0]$.
Then
\begin{align}\label{eq:ratefun}
\liminf_{\kappa\to0} \frac{\xi(\kappa)}{r(\kappa)}>0.
\end{align}
\end{lemma}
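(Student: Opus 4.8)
The plan is to combine the assumed upper bound on the dissipation \eqref{eq:updis} with the energy equality \eqref{eq:enreq} to get a \emph{lower} bound on the energy $\|\rho(t)\|^2_{L^2}$ at a well-chosen time, and then play this off against the exponential decay \eqref{eq:endecay} guaranteed by the definition of enhanced diffusion. The key observation is that if the cumulative dissipation up to time $t$ is small — say at most $\tfrac12\|\rho_0\|^2_{L^2}$ — then \eqref{eq:enreq} forces $\|\rho(t)\|^2_{L^2}\geq\big(1-2f(\xi(\kappa)t)\big)\|\rho_0\|^2_{L^2}$, which is bounded below by a positive constant. On the other hand, enhanced diffusion at rate $r(\kappa)$ says this same energy is at most $C\e^{-r(\kappa)t}\|\rho_0\|^2_{L^2}$. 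Comparing the two at the chosen time yields an inequality of the form $\e^{-r(\kappa)t}\gtrsim 1$, i.e. $r(\kappa)t\lesssim 1$, and choosing $t$ proportional to $1/\xi(\kappa)$ gives $r(\kappa)/\xi(\kappa)\lesssim 1$, which is \eqref{eq:ratefun}.

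More concretely, first I would pick a threshold $\eta\in(0,1/2)$ (say $\eta=1/4$) and, using continuity, monotonicity and $f(0)=0$, choose $\tau>0$ so that $f(\tau)\leq\eta$. Then set $t_\kappa:=\min\{\tau,1/(C\xi(\kappa))\}/\xi(\kappa)$ — more carefully, I want $t_\kappa$ such that both $\xi(\kappa)t_\kappa\leq\tau$ (so that $f(\xi(\kappa)t_\kappa)\leq\eta$) and $t_\kappa\leq 1/(C\xi(\kappa))$ (so that \eqref{eq:updis} applies); taking $t_\kappa=\min\{\tau,1/C\}/\xi(\kappa)$ works, and note $t_\kappa\geq c_0/\xi(\kappa)$ with $c_0:=\min\{\tau,1/C\}>0$ independent of $\kappa$. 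Plugging $t_\kappa$ into \eqref{eq:updis} and then into the energy equality \eqref{eq:enreq} gives
\begin{align}
\tfrac12\|\rho(t_\kappa)\|^2_{L^2}=\tfrac12\|\rho_0\|^2_{L^2}-\kappa\int_0^{t_\kappa}\|\nabla_\delta\rho(s)\|^2_{L^2}\,\rmd s\geq\Big(\tfrac12-\eta\Big)\|\rho_0\|^2_{L^2}.
\end{align}
Combining with \eqref{eq:endecay} applied at $t=t_\kappa$, namely $\|\rho(t_\kappa)\|^2_{L^2}\leq C\e^{-r(\kappa)t_\kappa}\|\rho_0\|^2_{L^2}$, and dividing by $\|\rho_0\|^2_{L^2}>0$ yields $(1-2\eta)\leq C\e^{-r(\kappa)t_\kappa}$, hence $r(\kappa)t_\kappa\leq\log\big(C/(1-2\eta)\big)=:K$. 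Since $t_\kappa\geq c_0/\xi(\kappa)$, this gives $r(\kappa)/\xi(\kappa)\leq K/c_0$ for all $\kappa\in(0,\kappa_0]$, and in particular $\limsup_{\kappa\to0}r(\kappa)/\xi(\kappa)<\infty$, which is equivalent to \eqref{eq:ratefun}.

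There isn't really a hard conceptual obstacle here — the argument is a short juggling of the three given inequalities — but the one point that needs genuine care is making sure the chosen comparison time $t_\kappa$ simultaneously lies in the range $t\leq 1/(C\xi(\kappa))$ where the upper bound \eqref{eq:updis} is valid \emph{and} keeps $\xi(\kappa)t_\kappa$ inside the region where $f$ is small, all while staying bounded below by a constant multiple of $1/\xi(\kappa)$ uniformly in $\kappa$. One should also double-check that $\|\rho_0\|_{L^2}\neq 0$ (which we may assume, else there is nothing to prove) so the division is legitimate, and note that the constants $C$ appearing in \eqref{eq:endecay} and in \eqref{eq:updis} are both harmless since they enter only through the logarithm and through $c_0$ respectively.
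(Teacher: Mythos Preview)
Your proof is correct and, if anything, slightly cleaner than the paper's. Both arguments combine the energy identity, the dissipation upper bound \eqref{eq:updis}, and the enhanced diffusion estimate \eqref{eq:endecay} at a well-chosen comparison time $t_\kappa$, but the choices differ. The paper argues by contradiction: assuming $\xi(\kappa)/r(\kappa)\to 0$ along a subsequence, it takes the geometric mean $t_\kappa=(\xi(\kappa)r(\kappa))^{-1/2}$, so that $\xi(\kappa)t_\kappa=\sqrt{\xi/r}\to 0$ (making $f$ small) while $r(\kappa)t_\kappa=\sqrt{r/\xi}\to\infty$ (making $\e^{-r t_\kappa}\to 0$), yielding the contradiction $1\leq 0$. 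You instead fix $t_\kappa=c_0/\xi(\kappa)$ with $c_0=\min\{\tau,1/C\}$ chosen a priori so that $f(\xi t_\kappa)\leq\eta<1/2$, and read off directly that $r(\kappa)t_\kappa\leq\log(C/(1-2\eta))$. Your route avoids the contradiction setup and in fact delivers a uniform quantitative bound $r(\kappa)/\xi(\kappa)\leq K/c_0$ for \emph{all} $\kappa\in(0,\kappa_0]$, which is marginally stronger than the paper's liminf statement. The paper's choice has the aesthetic appeal of symmetry between the two timescales, but your argument is more elementary and makes transparent exactly which features of $f$ (continuity at $0$ with $f(0)=0$) are actually used.
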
 

\begin{proof}
The starting point of the proof is the energy identity \eqref{eq:enreq}, which in this case reads
\begin{align}\label{eq:energyequation}
\frac12\|\rho(t)\|^2_{L^2}+\kappa \int_0^t \|\nabla_\delta\rho(s)\|^2_{L^2}\dd s=\frac12 \|\rho_0\|^2_{L^2}.
\end{align}
Assume for contradiction that
\begin{align}\label{eq:contrad}
\liminf_{\kappa\to0} \frac{\xi(\kappa)}{r(\kappa)}=0.
\end{align}
Then there exists a subsequence $\kappa_i\to 0$ such that $ \frac{\xi(\kappa_i)}{r(\kappa_i)}\to 0$ as $i\to \infty$.  Abusing notation in what follows, we restrict ourselves to this subsequence and denote $\kappa=\kappa_i$.
In light of \eqref{eq:endecay} and \eqref{eq:updis}, we deduce from \eqref{eq:energyequation} that
\begin{align}\label{eq:contrad2}
1- 2 f(\xi(\kappa) t)\leq C \e^{- r(\kappa)t}, \qquad \forall t\geq0.
\end{align}
Let us now define
\begin{align}
t_\kappa:=\left(\frac{1}{\xi(\kappa) r(\kappa)}\right)^{1/2}.
\end{align}
Thanks to \eqref{eq:contrad}, there exists $0<\kappa_1\leq\kappa_0$ such that
\begin{align}
\frac{1}{r(\kappa)}< t_\kappa< \frac{1}{\xi(\kappa)},\qquad \forall \kappa\in(0,\kappa_1).
\end{align}
Setting $t=t_\kappa$ in \eqref{eq:contrad2}, we find
\begin{align}
1-  2f\left( \left(\frac{\xi(\kappa)}{r(\kappa)}\right)^{1/2} \right)\leq C \exp\left[- \left(\frac{r(\kappa)}{\xi(\kappa)}\right)^{1/2}\right].
\end{align}
Letting $\kappa\to 0$ and using \eqref{eq:contrad}, it is immediate to check that the left-hand side tends to one, while
the right-hand side vanishes. This is a contradiction, and the proof is finished.
\end{proof}

This lemma simply says that a suitable upper bound on the dissipation rate gives a constraint on the enhanced diffusion rate $r(\kappa)$.
If \eqref{eq:updis} is satisfied for some $\xi(\kappa)$ and some initial datum, then the best possible enhanced diffusion rate for $\uu$
is $\xi(\kappa)$.

\begin{remark}
Lemma \ref{lem:crucial} clearly holds for the more general advection-diffusion equation \eqref{eq:DDE} and more general dissipation operators.
All  is needed is an energy equation of the form \eqref{eq:enreq}, with a possibly more general dissipation term.
\end{remark}

\section{Proof of Theorem \ref{thm:main}: Sharpness of enhanced diffusion time-scale}\label{sec:main}
The proofs of each point in the theorem are similar. Therefore, we will carry out in detail the first case, and highlight the main differences for the others.

\subsection{Regular shear flows with critical points}\label{subMain1}
We begin with the proving the first point in Theorem \ref{thm:main}.
Without loss of generality, we can assume that one of the critical points of highest order is at $y=0$, so that, 
\begin{align}
u^{(j)}(0)=0, \qquad \forall  j=1,\ldots, n-1, \quad \text{ and } \quad u^{(n)}(0)\neq 0.
\end{align}
We define the initial datum $\rho_0$ as
\begin{align}\label{eq:theta0}
\rho_0(x,y)=\varphi (y)\sin x ,
\end{align}
where the function $\varphi$ (see Figure \ref{fig:phi}) is given by
\be\label{eq:phi}
\varphi(y)=\begin{cases}
\displaystyle\frac{\kappa^\beta-|y|}{\kappa^\beta}, \quad &y\in [- \kappa^\beta, \kappa^\beta],\\
0, \quad &\text{elsewhere},
\end{cases}
\ee
and 
\be\label{eq:beta}
\beta:=\frac{1}{n+2}.
\ee
Essentially,  $\varphi$ localizes the initial datum near one of the critical points of highest order in a Lipschitz way.

\begin{figure}[h!]
  \includegraphics[width=10cm]{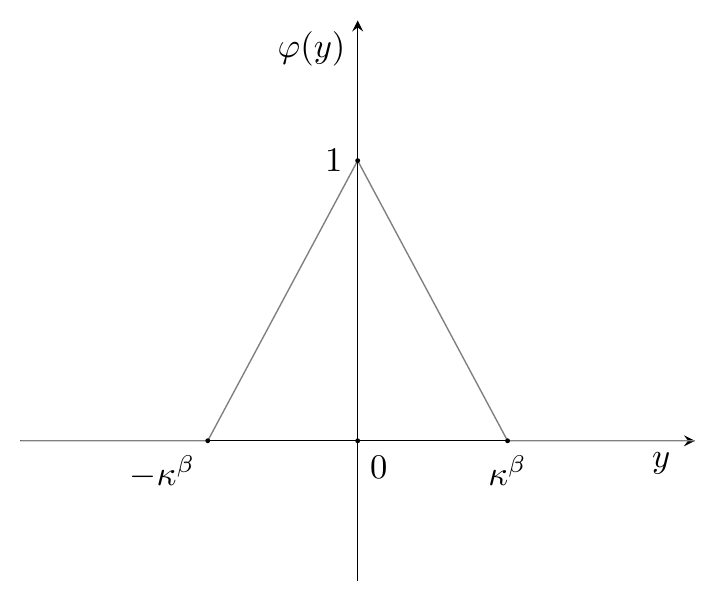}
  \caption{The function $\varphi$}
  \label{fig:phi}
\end{figure}

Since for any $p\geq 1$ we have
\be
\int_{D}|\varphi(y)|^p\dd y=\frac{2\kappa^{\beta}}{p+1}, \qquad \|\de_y\varphi\|^2_{L^\infty}=\frac{1}{\kappa^{2\beta}},
\ee
we then have for some constant $c_{p}>0$ independent of $\kappa$ that 
\begin{align}\label{eq:boundstheta0}
 \|\rho_0\|^2_{L^p}=c_p\kappa^{2\beta/p},\qquad
\|\de_x \rho_0\|^2_{L^\infty}= 1,\qquad
\|\de_y \rho_0\|^2_{L^\infty}=\frac{1}{\kappa^{2\beta}}.
\end{align}
The idea is to apply Lemma \ref{lem:upperFDR}  to obtain a suitable upper bound on the energy dissipation rate. Define
\be
\Omega_{\kappa}=\T\times I_{\kappa},\qquad I_{\kappa}=[-2\kappa^{\beta},2\kappa^\beta],\qquad \Omega'_{\kappa}=\T\times I^c_{\kappa}.
\ee
We now split
\begin{align}\label{eq:split}
\int_{\Omega} {\rm Var}\left(\rho_0(\bxi_{t,0}(\bx)\right)\rmd \bx
=\int_{\Omega_\kappa} {\rm Var}\left(\rho_0(\bxi_{t,0}(\bx))\right)\rmd \bx+\int_{\Omega'_\kappa} {\rm Var}\left(\rho_0(\bxi_{t,0}(\bx))\right)\rmd \bx,
\end{align}
and estimate both terms from above separately.  For the shear flow, we note that the trajectories simplify 
\begin{align}
 X_{t,s}(x,y)  &= x  +\delta \sqrt{2\kappa} B_s  -\int_s^t u( y + \sqrt{2\kappa} W_\tau) \rmd \tau, \label{eq:xi}\\
 Y_{t,s}(x,y)  &= y + \sqrt{2\kappa} W_s. \label{eq:zeta}
\end{align}

\medskip

\noindent \textbf{Contribution of the region $\Omega_{\kappa}$.}
Using Lemma \ref{lem:upperFDR} with $\Omega_0=\Omega_\kappa$,  \eqref{eq:boundstheta0}, and referring to \eqref{eq:xi}--\eqref{eq:zeta}, 
\begin{align}
\int_{\Omega_\kappa} {\rm Var}\left(\rho_0(\bxi_{t,0}(\bx))\right)\rmd \bx 
&\leq \nonumber
\frac{\| \de_x\rho_0\|^2_{L^\infty}}{2}\int_{\Omega_\kappa}\E ^{1,2}\left|X_{t,0}^{(1)}(\bx)- X_{t,0}^{(2)}(\bx)\right|^2\rmd \bx\\
&\qquad +   \frac{\| \de_y\rho_0\|^2_{L^\infty}}{2}\int_{\Omega_\kappa}\E ^{1,2}\left|Y_{t,0}^{(1)}(\bx)- Y_{t,0}^{(2)}(\bx)\right|^2\rmd \bx\notag\\
&\lesssim
\int_{\Omega_\kappa}\E ^{1,2}\left|X_{t,0}^{(1)}(\bx)- X_{t,0}^{(2)}(\bx)\right|^2\rmd \bx   \notag\\
&\qquad 
+\frac{1}{\kappa^{2\beta}}\int_{\Omega_\kappa}\E ^{1,2}\left|Y_{t,0}^{(1)}(\bx)- Y_{t,0}^{(2)}(\bx)\right|^2\rmd \bx. \nonumber
\end{align}
To estimate the first term, we compute
\begin{align}\nonumber
\E ^{1,2}\left|X_{t,0}^{(1)}(\bx)- X_{t,0}^{(2)}(\bx)\right|^2 &\lesssim \kappa\E ^{1,2}|B^{(1)}_0-B^{(2)}_0|^2  \\
&\qquad +\E ^{1,2}  \left| \int_0^t (u( y +  \sqrt{2\kappa} W_\tau^{(1)})- u( y +  \sqrt{2\kappa} W_\tau^{(2)})) \rmd \tau\right|^2. \label{eq:fasd}
\end{align}
The first term is easily estimated as
\begin{align}\label{eq:bound1}
\kappa\E ^{1,2}|B^{(1)}_0-B^{(2)}_0|^2 \lesssim \kappa t.
\end{align}
For the second term, we preliminary notice that if $y\in [-2\kappa^{\beta},2\kappa^\beta]$, we have
\begin{align}
|u( y + \sqrt{2\kappa} W_\tau^{(1)})-u(0)|\lesssim |y|^n +\kappa^{n/2}| W_\tau^{(1)}|^n\lesssim \kappa^\frac{n}{n+2}+\kappa^{n/2}| W_\tau^{(1)}|^n.
\end{align}
Therefore,  if $y\in [-2\kappa^{\beta},2\kappa^\beta]$, we have
\begin{align}\label{eq:onlydiff}
\E ^{1,2}  &\left| \int_0^t (u( y + \sqrt{2\kappa} W_\tau^{(1)})- u( y + \sqrt{2\kappa} W_\tau^{(2)})) \rmd \tau\right|^2\notag\\
&\lesssim  t\int_0^t\E ^{1,2}  \left[ \left| (u( y + \sqrt{2\kappa} W_\tau^{(1)})-u(0)\right|^2 +\left|u(0)-u( y + \sqrt{2\kappa} W_\tau^{(2)}))\right|^2 \right]\rmd \tau\notag\\
&\lesssim  t\int_0^t\E ^{1,2}  \left[ \kappa^\frac{2n}{n+2}+\kappa^{n}| W_\tau^{(1)}|^{2n}+\kappa^{n}| W_\tau^{(2)}|^{2n} \right]\rmd \tau.
\end{align} 
Using that for a standard time-reversed one-dimensional Brownian motion there holds
\begin{align}\label{eq:Brownmom}
\E  W_\tau^{2n}= \frac{(2n)!}{n! 2^n} (t-\tau)^n, \quad \tau\leq t,
\end{align}
 we end up with
\begin{align}\label{eq:bound2}
\E ^{1,2}  &\left| \int_0^t (u( y + \sqrt{2\kappa} W_\tau^{(1)})- u( y + \sqrt{2\kappa} W_\tau^{(2)})) \rmd \tau\right|^2
\lesssim  \left(\kappa^\frac{n}{n+2}t\right)^2+\left(\kappa^\frac{n}{n+2}t\right)^{n+2}.
\end{align}
Consequently, appealing to \eqref{eq:bound1}, \eqref{eq:bound2} and \eqref{eq:boundstheta0} and the fact that
$|\Omega_\kappa|=8\pi  \kappa^\beta$ is comparable to $\|\rho_0\|^2_{L^2}$, we deduce that
\begin{align}\label{eq:fasd2}
\int_{\Omega_\kappa}\E ^{1,2}\left|X_{t,0}^{(1)}(\bx)- X_{t,0}^{(2)}(\bx)\right|^2\dd \bx \lesssim 
\|\rho_0\|^2_{L^2}\left( \kappa t+\left(\kappa^\frac{n}{n+2}t\right)^2+\left(\kappa^\frac{n}{n+2}t\right)^{n+2}\right).
\end{align}
For the second term in \eqref{eq:fasd}, we recall \eqref{eq:beta}  and obtain
\be
\frac{1}{\kappa^{2\beta}} \int_{\Omega_\kappa}  \E ^{1,2} | Y^{(1)}_{t,0}(\bx) - Y^{(2)}_{t,0}(\bx) |^2 \dd\bx
\lesssim \kappa^{1-2\beta} |\Omega_\kappa| \E ^{1,2}  \left| W_0^{(1)}- W_0^{(2)}\right|^2
\lesssim\|\rho_0\|^2_{L^2}\kappa^\frac{n}{n+2} t.
\ee
Hence, since $\kappa\leq \kappa^\frac{n}{n+2}$, we conclude that
\begin{align}\label{eq:boundVar1}
\int_{\Omega_\kappa} {\rm Var}\left(\rho_0(\bxi_{t,0}(\bx))\right)\rmd \bx 
\lesssim \|\rho_0\|^2_{L^2}\left(\kappa^\frac{n}{n+2} t+\left(\kappa^\frac{n}{n+2}t\right)^2+\left(\kappa^\frac{n}{n+2}t\right)^{n+2}\right).
\end{align}
\medskip

\noindent \textbf{Contribution of the region $\Omega_{\kappa}'$.}
In order to give an upper bound of the second term in \eqref{eq:split}, we can first argue point-wise for $y\in I^c_{\kappa}$.
Chebyshev inequality and \eqref{eq:Brownmom} imply that
\be
\mathbb{P}\left[\sqrt{2\kappa}|W_0|>|y|-\kappa^\beta\right]\leq \frac{4\kappa^{2}}{(|y|-\kappa^\beta)^{4}}\E |W_0|^{4}
= 12\frac{\kappa^{2} t^2}{(|y|-\kappa^\beta)^{4}}.
\ee
Therefore,
\be
\mathbb{P}\left[ Y_{t,0}(x,y)\in \spt(\varphi)\right]\leq 12\frac{\kappa^{2}t^2}{(|y|-\kappa^\beta)^{4}}, \qquad \forall (x,y)\in \Omega'_\kappa,
\ee
so that 
\be\label{eq:boundell}
\mathbb{P}\left[ \rho_0(\bxi_{t,0}(\bx))\neq 0\right]\leq 12\frac{\kappa^{2}t^2}{(|y|-\kappa^\beta)^{4}}, \qquad \forall \bx\in \Omega'_\kappa.
\ee
Thus, applying H\"older inequality and using \eqref{eq:boundstheta0} we deduce that
\begin{align}
\int_{\Omega'_\kappa}\E |\rho_0(\bxi_{t,0}(\bx))|^2\rmd \bx
&= \int_{\Omega'_\kappa}\E \left[\mathbbm{1}_{\left[ \rho_0(\bxi_{t,0}(\bx))\neq 0\right]} |\rho_0(\bxi_{t,0}(\bx))|^2\right]\rmd \bx\notag\\
&\leq \left( \int_{\Omega'_\kappa}\E  |\rho_0(\bxi_{t,0}(\bx))|^4\rmd \bx\right)^{1/2}\left( \int_{\Omega'_\kappa}\E \left[\mathbbm{1}_{\left[ \rho_0(\bxi_{t,0}(\bx))\neq 0\right]}\right] \rmd \bx\right)^{1/2}\notag\\
&\lesssim \|\rho_0\|^2_{L^4}\kappa t\left( \int_{\Omega'_\kappa}\frac{1}{(|y|-\kappa^\beta)^{4}} \rmd x\dd y\right)^{1/2}\notag\\
&\lesssim \|\rho_0\|^2_{L^4}\kappa t\left(  \int_{2\kappa^\beta}^\infty\frac{1}{(y-\kappa^\beta)^{4}} \rmd y\right)^{1/2}\notag
\lesssim \|\rho_0\|^2_{L^2}\kappa^\frac{n}{n+2} t.
\end{align}
Hence, we are able to conclude that
\begin{align}\label{eq:boundVar2}
\int_{\Omega'_\kappa} {\rm Var}\left(\rho_0(\bxi_{t,0}(\bx))\right)\rmd \bx\lesssim \int_{\Omega'_\kappa}\E |\rho_0(\bxi_{t,0}(\bx))|^2\rmd \bx
\lesssim  \|\rho_0\|^2_{L^2} \kappa^\frac{n}{n+2} t.
\end{align}
Thus, \eqref{eq:boundVar1} together with \eqref{eq:boundVar2} yields
\begin{align}
\int_{\Omega} {\rm Var}\left(\rho_0(\bxi_{t,0}(\bx)\right)\rmd \bx\lesssim
 \|\rho_0\|^2_{L^2}\left(\kappa^\frac{n}{n+2} t+\left(\kappa^\frac{n}{n+2}t\right)^2+\left(\kappa^\frac{n}{n+2}t\right)^{n+2}\right),
\end{align}
which, by the FDR \eqref{eq:FDR1}, it implies the upper bound
\be\label{eq:finalbou}
\kappa \int_0^t  \|\nabla \rho(s)\|_{L^2}^2 \rmd s \lesssim
 \|\rho_0\|^2_{L^2}\left(\kappa^\frac{n}{n+2} t+\left(\kappa^\frac{n}{n+2}t\right)^2+\left(\kappa^\frac{n}{n+2}t\right)^{n+2}\right).
\ee
We are now in the position of applying Lemma \ref{lem:crucial} and conclude the proof of Theorem \ref{thm:main} in the first case of regular 
shear flows with critical points (namely, sharpness of Theorem \ref{thm:firstthm} up to the log correction, in the case $n\geq 2$).

\subsection{H\"older and Lipschitz shear flows}
In this section, we prove the second point in Theorem \ref{thm:main}.
Instead of focusing on the specific case of the Weierstrass function \eqref{eq:Wei}, we assume in this section that 
the shear flow $u$ is either (uniformly) H\"older or Lipschitz continuous. Namely, assume
 that for some $\alpha\in (0,1]$, the shear flow driving \eqref{eq:shear} satisfies the bound
\be
|u(y)- u(y')|\leq c|y-y'|^\alpha, \qquad \forall y,y'\in D,
\ee
for some $c>0$. Now, define the initial datum as in \eqref{eq:theta0}, where it does not matter where $\varphi$ is centered. As for the parameter
$\beta$ in \eqref{eq:beta}, simply replace $n$ by $\alpha$, so that
\begin{align}\label{eq:beta2}
\beta:=\frac{1}{\alpha+2}.
\end{align}
Taking the domain-splitting as in \eqref{eq:split} the only difference relies on treating the term \eqref{eq:onlydiff}. In this case,
using Jensen's inequality, we find
\begin{align*}
\E ^{1,2}  &\left| \int_0^t (u( y + \sqrt{2\kappa} W_\tau^{(1)})- u( y + \sqrt{2\kappa} W_\tau^{(2)})) \rmd \tau\right|^2\\
&\qquad \lesssim t \int_0^t \E ^{1,2} |u( y + \sqrt{2\kappa} W_\tau^{(1)})- u( y + \sqrt{2\kappa} W_\tau^{(2)})|^2\rmd \tau \notag\\
&\qquad \lesssim \kappa^\alpha t\int_0^t \E ^{1,2} | W_\tau^{(1)}-W_\tau^{(2)}|^{2\alpha}\rmd \tau\lesssim \kappa^\alpha t\int_0^t \left(\E ^{1,2} | W_\tau^{(1)}-W_\tau^{(2)}|^{2}\right)^{\alpha}\rmd \tau\lesssim  \kappa^\alpha t\int_0^t \tau^{\alpha}\rmd \tau.
\end{align*} 
Hence \eqref{eq:bound2} is replaced by
\begin{align}\label{eq:bound2bis}
\E ^{1,2}  &\left| \int_0^t (u( y + \sqrt{2\kappa} W_\tau^{(1)})- u( y + \sqrt{2\kappa} W_\tau^{(2)})) \rmd \tau\right|^2
\lesssim  \kappa^\frac{\alpha}{\alpha+2}t,
\end{align}
and \eqref{eq:boundVar1} becomes
\begin{align}\label{eq:boundVar1bis}
\int_{\Omega_\kappa} {\rm Var}\left(\rho_0(\bxi_{t,0}(\bx))\right)\rmd \bx 
\lesssim \|\rho_0\|^2_{L^2}\kappa^\frac{\alpha}{\alpha+2} t.
\end{align}
Concerning \eqref{eq:boundVar2}, there is no difference, besides replacing $n$ with $\alpha$. Therefore
\eqref{eq:finalbou} takes the form
\be\label{eq:finalbou2}
\kappa \int_0^t  \|\nabla \rho(s)\|_{L^2}^2\rmd s \lesssim
 \|\rho_0\|^2_{L^2}\kappa^\frac{\alpha}{\alpha+2} t.
\ee
We can therefore appeal to  Lemma \ref{lem:crucial} and conclude the proof of Theorem \ref{thm:main} in the second case and in the first for 
$n=1$.

\subsection{Circular flows}\label{sec:circ}
In this section, we prove the final point in Theorem \ref{thm:main}.
To proceed, we set
\begin{align}\label{eq:beta3}
\beta:=\frac{1}{q+2},
\end{align}
and fix an initial datum localized near $r=0$, but \emph{away from it}, so $\varphi$ is the same as in \eqref{eq:phi} and
\begin{align}
\rho_0(r,\theta)=\varphi(r-3\kappa^\beta)\sin \theta.
\end{align}
Notice that $\rho_0$ is supported in the $r$-strip $[2\kappa^\beta, 4\kappa^\beta]$.
Take
\be
\Omega_{\kappa}= I_{\kappa}\times \T,\qquad I_{\kappa}=[\kappa^\beta,5\kappa^\beta],\qquad \Omega'_{\kappa}= I^c_{\kappa}\times\T.
\ee
Moreover, since
\begin{align}
\int_0^\infty |\varphi(r)|^p r\dd r=c_p\kappa^{2\beta},
\end{align}
we have
\begin{align}\label{eq:boundstheta0bis}
 \|\rho_0\|^2_{L^p}=c_p\kappa^{4\beta/p},\qquad
\|\de_\theta \rho_0\|^2_{L^\infty}= 1,\qquad
\|\de_r \rho_0\|^2_{L^\infty}=\frac{1}{\kappa^{2\beta}}.
\end{align}
Recall now that the advection diffusion equation written in polar coordinates \eqref{eq:cauchycirc}, \eqref{laplacepolar} reads
\begin{align}\label{polarAD}
\de_t \rho+r^q \de_\theta \rho =\kappa \left(\de_{rr}+\frac{1}{r} \de_r +\frac{1}{r^2}\de_{\theta\theta}\right)\rho.
\end{align}
For the representation formulae for solutions of \eqref{polarAD} and their dissipation, we introduce
the   stochastic characteristics $\Theta_{t,s}(r,\theta), R_{t,s}(r,\theta)$ which satisfy the following backward It\^{o} SDEs
\begin{align}
\hat{\dd} \Theta_{t,s}= R_{t,s}^q\dd s+\frac{\sqrt{2\kappa}}{R_{t,s}}\hat{\rmd} B_s  ,\qquad \Theta_{t,t}=\theta,\label{eq:Theta}\\
\hat{\dd} R_{t,s} =-\frac{\kappa}{R_{t,s}}\dd s+\sqrt{2\kappa}\ \hat{\rmd} W_s,\qquad R_{t,t}=r.\label{eq:R}
\end{align}
We remark that with these characteristics, the Feynman-Kac formula reads
\be
\rho_t(r,\theta) = \mathbb{E}\left[ \rho_0(R_{t,0}(r,\theta), \Theta_{t,0}(r,\theta))\right].
\ee
We will require the following lemma on radial and angular  particle dispersion.

\begin{lemma}
Let $\Theta_{t,s}(r,\theta), R_{t,s}(r,\theta)$ solve \eqref{eq:Theta} and \eqref{eq:R} respectively.  Then, we have
\begin{align}\label{eq:good1}
\E ^{1,2}\left|R_{t,0}^{(1)}(r,\theta)- R_{t,0}^{(2)}(r,\theta)\right|^2
&\lesssim \kappa(t-s),
\end{align}
and 
\begin{align}\label{eq:THETA2}
\E ^{1,2}\left|\Theta_{t,0}^{(1)}(r,\theta)- \Theta_{t,0}^{(2)}(r,\theta)\right|^2
\lesssim  t^2\left[r^{2	q }+\kappa^q t^q\right]+ \ln\left(\frac{1}{1-4\kappa t/r^2}\right) ,\qquad \forall t< \frac{r^2}{4\kappa}.
\end{align}
\end{lemma}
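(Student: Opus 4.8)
The plan is to work directly with the explicit backward SDEs \eqref{eq:Theta}--\eqref{eq:R}, pass to forward time by setting $u=t-s$, and exploit that the radial marginal is a rescaled planar Bessel process. Indeed, from \eqref{eq:R} the process $\widetilde R_u:=R_{t,t-u}$ solves $\rmd\widetilde R_u=\tfrac{\kappa}{\widetilde R_u}\rmd u+\sqrt{2\kappa}\,\rmd\widetilde W_u$ with $\widetilde R_0=r$ for a standard Brownian motion $\widetilde W$; comparing with the It\^o formula for $|\cdot|$ applied to a planar Brownian motion started at $re_1$ (with $e_1$ a fixed unit vector) gives $\widetilde R_u\overset{d}{=}|re_1+\sqrt{2\kappa}\,\boldsymbol B_u|$, so in particular $R_{t,\tau}\overset{d}{=}|re_1+\sqrt{2\kappa}\,\boldsymbol B_{t-\tau}|$. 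This representation controls all radial moments cheaply.

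\textbf{Radial dispersion.} Since $R_{t,0}^{(1)},R_{t,0}^{(2)}$ are i.i.d., $\E^{1,2}|R_{t,0}^{(1)}-R_{t,0}^{(2)}|^2=2\,\Var(R_{t,0})$. The backward It\^o formula applied to $r\mapsto r^2$ along \eqref{eq:R} gives $\hat{\rmd}(R_{t,s}^2)=-4\kappa\,\rmd s+2\sqrt{2\kappa}\,R_{t,s}\,\hat{\rmd}W_s$, hence $\E R_{t,0}^2=r^2+4\kappa t$; on the other hand $\widetilde R$ has nonnegative drift, equivalently $\E R_{t,0}=\E|re_1+\sqrt{2\kappa}\boldsymbol B_t|\ge|re_1|=r$ by Jensen. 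Therefore $\Var(R_{t,0})\le 4\kappa t$, which is \eqref{eq:good1} (and running the same computation from time $s$ instead of $0$ gives the $\kappa(t-s)$ version).

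\textbf{Angular dispersion.} Integrating \eqref{eq:Theta} over $[0,t]$ yields $\Theta_{t,0}(r,\theta)=\theta-\int_0^t R_{t,\tau}^q\,\rmd\tau-\sqrt{2\kappa}\int_0^t R_{t,\tau}^{-1}\hat{\rmd}B_\tau$. Subtracting two independent copies and using $|a+b|^2\le 2a^2+2b^2$,
\begin{align*}
\E^{1,2}\bigl|\Theta_{t,0}^{(1)}-\Theta_{t,0}^{(2)}\bigr|^2
&\le 2\,\E^{1,2}\Bigl|\int_0^t\bigl((R_{t,\tau}^{(1)})^q-(R_{t,\tau}^{(2)})^q\bigr)\rmd\tau\Bigr|^2\\
&\qquad +2\,\E^{1,2}\Bigl|\sqrt{2\kappa}\int_0^t\Bigl(\tfrac{\hat{\rmd}B_\tau^{(1)}}{R_{t,\tau}^{(1)}}-\tfrac{\hat{\rmd}B_\tau^{(2)}}{R_{t,\tau}^{(2)}}\Bigr)\Bigr|^2 .
\end{align*}
For the drift term, Cauchy--Schwarz in $\tau$ together with $\E R_{t,\tau}^{2q}=\E|re_1+\sqrt{2\kappa}\boldsymbol B_{t-\tau}|^{2q}\lesssim r^{2q}+\kappa^q(t-\tau)^q\lesssim r^{2q}+\kappa^q t^q$ produces the bound $\lesssim t^2\bigl[r^{2q}+\kappa^q t^q\bigr]$, the first summand in \eqref{eq:THETA2}. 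For the martingale term: given the radial paths the inner integrals are conditionally Gaussian, and by the independence of $(B^{(1)},W^{(1)})$ from $(B^{(2)},W^{(2)})$ this term is controlled by $\kappa\,\E\int_0^t R_{t,\tau}^{-2}\,\rmd\tau$. Estimating this last quantity is where the logarithm and the condition $t<r^2/(4\kappa)$ enter: one stops the radial trajectory at the first time $\sigma$ that $R$ drops below, say, $r/2$; on $\{\sigma>t\}$ the integral is $\lesssim t/r^2\lesssim\ln\frac{1}{1-4\kappa t/r^2}$, while by $R_{t,\tau}\overset{d}{=}|re_1+\sqrt{2\kappa}\boldsymbol B_{t-\tau}|$ the probability that the radial motion dips below $r/2$ before time $t$ is Gaussian-small exactly when the radial spread $\sqrt{\kappa t}$ has not yet reached the scale $r$, i.e. for $t<r^2/(4\kappa)$. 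Adding the two contributions gives \eqref{eq:THETA2}.

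\textbf{Main obstacle.} The delicate step is precisely the control of the singular factor $R_{t,\tau}^{-2}$ in the angular martingale: a naive second-moment estimate via the It\^o isometry is too lossy near $R\approx 0$, so one must genuinely work on the event that the radial trajectory stays away from the origin and quantify the (Gaussian-type) smallness of its complement, which is what forces $t<r^2/(4\kappa)$ and produces the logarithmic term. Everything else — the $R^2$-SDE, the $R^{2q}$-moment bound from the Bessel representation, the two applications of Cauchy--Schwarz and the conditional It\^o isometry — is routine.
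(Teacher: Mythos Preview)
Your treatment of the radial dispersion and of the drift contribution to the angular dispersion is essentially equivalent to the paper's, only phrased through the Bessel representation $R_{t,\tau}\overset{d}{=}|re_1+\sqrt{2\kappa}\,\boldsymbol B_{t-\tau}|$. The paper instead records the moment identity $\hat{\rmd}R_{t,s}^p=-\kappa p^2 R_{t,s}^{p-2}\,\rmd s+p\sqrt{2\kappa}\,R_{t,s}^{p-1}\hat{\rmd}W_s$, iterates it from the exact formula $\E R_{t,s}^2=r^2+4\kappa(t-s)$ to obtain $\E R_{t,s}^{2\ell}\lesssim r^{2\ell}+\kappa^\ell(t-s)^\ell$, and for the radial variance applies It\^o directly to $(R_{t,s}-r)^2$. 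These are interchangeable in content and effort.

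The gap is in your handling of $\kappa\int_0^t\E R_{t,\tau}^{-2}\,\rmd\tau$. Your stopping-time decomposition bounds the integral on $\{\sigma>t\}$, but on $\{\sigma\le t\}$ you have only controlled the \emph{probability} of the event, not $\E\bigl[\int_0^t R_{t,\tau}^{-2}\,\rmd\tau\,;\,\sigma\le t\bigr]$. A small probability does not make this small, because the integrand is unbounded on the bad event; in fact your own representation gives $\E R_{t,\tau}^{-2}=\E|re_1+\sqrt{2\kappa}\,\boldsymbol B_{t-\tau}|^{-2}$, and in two dimensions this diverges logarithmically at the origin for every $\tau<t$, so the bad-event piece is infinite and the split cannot close as written. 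The paper does not use a stopping time here: it applies the $p=-2$ case of the moment identity and Jensen to obtain a closed backward differential inequality $y'\le-4\kappa y^2$ for $y(s)=\E R_{t,s}^{-2}$ with terminal value $y(t)=r^{-2}$, and integrates it to produce simultaneously the bound $y(s)\le (r^2-4\kappa(t-s))^{-1}$, the constraint $t<r^2/(4\kappa)$, and the logarithm via $\kappa\int_0^t y(s)\,\rmd s$. If you insist on a stopping-time argument you would have to work with the stopped $\Theta$-martingale throughout and separately control the post-$\sigma$ occupation of neighbourhoods of the origin, which is substantially more work than the paper's ODE route.
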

\begin{proof}
To prove \eqref{eq:good1}, we compute  from \eqref{eq:R} that
\begin{align}
\hat{\dd} (R_{t,s}-r)^2 &=-\left(2\kappa\frac{R_{t,s}-r}{R_{t,s}}+2\kappa\right)\dd s+2\sqrt{2\kappa}(R_{t,s}-r) \hat{\rmd} W_s\notag\\ 
&=-\left(4\kappa-2\kappa\frac{r}{R_{t,s}}\right)\dd s+2\sqrt{2\kappa}(R_{t,s}-r) \hat{\rmd} W_s. \label{eq:R2}
\end{align}
Integrating and taking expectations (using the fact aht $R_{t,\tau}\geq 0$) we find
\begin{align}\label{eq:Rminusr}
\E(R_{t,s}-r)^2 =4\kappa(t-s)-2\kappa\int_s^t\E\frac{r}{R_{t,\tau}}\dd \tau\leq 4\kappa(t-s).
\end{align}
In particular we have the bound \eqref{eq:good1} on the radial dispersion of trajectories.
To obtain the estimate \eqref{eq:THETA2}, we note that from \eqref{eq:Theta} we have 
\begin{align}
\Theta_{t,0}= \theta -\int_0^t R_{t,s}^q\dd s-\sqrt{2\kappa}\int_0^t\frac{1}{R_{t,s}}\hat{\rmd} B_s.
\end{align}
It follows by It\^{o} isometry and Cauchy-Schwarz that
\begin{align}\label{eq:THETA1}
\E|\Theta_{t,0}-\theta|^2\lesssim t\int_0^t \E|R_{t,s}|^{2q}\dd s +\kappa\int_0^t \E R_{t,s}^{-2}\dd s.
\end{align}
We now bound these two terms individually.  For the first, note that  
\begin{align}\label{eq:pmom}
\hat{\dd} R_{t,s}^p =-\kappa p^2 R_{t,s}^{p-2}\dd s+p\sqrt{2\kappa}\ R_{t,s}^{p-1}\hat{\rmd} W_s,
\end{align}
so that for any real $p$
\begin{align}
 R_{t,s}^p =r^p+\kappa p^2 \int_s^t R_{t,\tau}^{p-2}\dd \tau
 -p\sqrt{2\kappa}\int_s^t R_{t,\tau}^{p-1}\hat{\rmd} W_\tau.
\end{align}
Hence the second moment satisfies
\begin{align}\label{eq:mombase}
\E |R_{t,s}|^2 =r^2+4\kappa (t-s).
\end{align}
Using \eqref{eq:mombase}, we iterate \eqref{eq:pmom} to obtain for any $\ell\in \N$ that
\begin{align}\label{eq:highmoms}
\E |R_{t,s}|^{2\ell} \lesssim r^{2\ell}+\kappa^\ell (t-s)^\ell.
\end{align}
To estimate the first term in \eqref{eq:THETA1}, we use \eqref{eq:highmoms} with $\ell =\lceil q\rceil \geq q$ and  Jensen's inequality
together with the simple bound  $(x+y)^\alpha\lesssim x^\alpha+y^\alpha$ to find
\begin{align}
\int_0^t \E|R_{t,s}|^{2q}\dd s=\int_0^t \E\left[|R_{t,s}|^{2\ell}\right]^{q/\ell}\dd s &\leq\int_0^t \left[\E|R_{t,s}|^{2\ell}\right]^{q/\ell}\dd s
\lesssim\int_0^t \left[r^{2\ell}+\kappa^\ell (t-s)^\ell\right]^{q/\ell}\dd s\notag\\
&\lesssim\int_0^t \left[r^{2q}+\kappa^q (t-s)^q\right]\dd s\lesssim t\left[r^{2	q }+\kappa^q t^q\right] .
\end{align}
To estimate the second term in \eqref{eq:THETA1}, we note that from \eqref{eq:pmom} with $p=-2$ we have
\begin{align}\label{eq:inverse}
\frac{\dd}{\dd s} \E R_{t,s}^{-2} =-4\kappa  \E R_{t,s}^{-4}\leq-4\kappa  \left[\E R_{t,s}^{-2}\right]^2 ,\qquad \E R_{t,t}^{-2}=\frac{1}{r^2}
\end{align}
using Jensen's inequality.
Calling $y(s)=\E R_{t,s}^{-2}$, we have the \emph{backward} ODE
\begin{align}
y'\leq -4\kappa y^2.
\end{align}
Provided $4\kappa(t-s)< r^2$ we have upon integrating the above differential inequality
\begin{align}
 \E R_{t,s}^{-2} \leq \frac{1}{r^2-4\kappa(t-s)}.
\end{align}
We use this to estimate the contribution to \eqref{eq:THETA1} as follows
\begin{align}\label{eq:negmom}
\kappa\int_0^t \E R_{t,s}^{-2}\dd s\leq \kappa\int_0^t\frac{\dd s}{r^2-4\kappa(t-s)}=  \ln\left(\frac{1}{1-4\kappa t/r^2}\right) ,\qquad \forall t< \frac{r^2}{4\kappa} .
\end{align}
Thus, from \eqref{eq:THETA1} and the above, we deduce that
\begin{align}
\E|\Theta_{t,0}-\theta|^2
\lesssim  t^2\left[r^{2	q }+\kappa^q t^q\right]+ \ln\left(\frac{1}{1-4\kappa t/r^2}\right) ,\qquad \forall t< \frac{r^2}{4\kappa}.
\end{align}
This establishes the angular dispersion estimate \eqref{eq:THETA2} and completes the proof.
\end{proof}

We now continue with the proof of the third part of Theorem \ref{thm:main}. First note that, as in Lemma \ref{lem:upperFDR}, for any  $\Omega_0\subset \R^2$ we have
\begin{align}\nonumber
\int_{\Omega_0} {\rm Var}\left(\rho_0(\bxi_{t,0}(\bx)\right)\rmd \bx 
&= \frac{1}{4}\int_{\Omega_0}\mathbb{E}\left|\rho_0(\bxi_{t,0}^{(1)}(\bx))- \rho_0(\bxi_{t,0}^{(2)}(\bx))\right|^2\rmd \bx\\ \nonumber
& \leq  \frac{\| \de_r\rho_0\|^2_{L^\infty(\Omega_0)}}{2}\int_{\Omega_0}\E ^{1,2}\left|R_{t,0}^{(1)}(r,\theta)- R_{t,0}^{(2)}(r,\theta)\right|^2r\rmd r\dd\theta\nonumber\\
&\quad +   \frac{\| \de_\theta\rho_0\|^2_{L^\infty(\Omega_0)}}{2}\int_{\Omega_0}\E ^{1,2}\left|\Theta_{t,0}^{(1)}(r,\theta)- \Theta_{t,0}^{(2)}(r,\theta)\right|^2r\rmd r \dd\theta.
\end{align}
Recall that we divide the domain $\mathbb{R}\times \mathbb{T}$ into $\Omega_{\kappa}= I_{\kappa}\times \T$ and $\Omega'_{\kappa}= I^c_{\kappa}\times\T$ with $I_{\kappa}=[\kappa^\beta,5\kappa^\beta]$.
\medskip

\noindent \textbf{Contribution of the region $\Omega_{\kappa}$.}
We treat the terms involving $R$ and $\Theta$ separately.  
For the term involving radial dispersion, using \eqref{eq:good1} and  \eqref{eq:boundstheta0bis} we have
\begin{align}
&\int_{\Omega_\kappa}\E ^{1,2}\left|R_{t,0}^{(1)}(r,\theta)- R_{t,0}^{(2)}(r,\theta)\right|^2r\rmd r\dd\theta
\lesssim \kappa^{2\beta}\kappa t \lesssim \|\rho_0\|^2_{L^2} \kappa t.
\end{align}
To estimate the term involving $\Theta$, we use \eqref{eq:THETA2}. Notice that since in this case $r\geq \kappa^{\beta}$, 
the restriction $t< \frac{r^2}{\kappa}$ is in particular satisfied if 
\begin{align}\label{eq:constraint}
t<\frac{1}{4\kappa^\frac{q}{q+2}}.
\end{align}
For these times we obtain
\begin{align}
\int_{\Omega_\kappa}\E ^{1,2}\left|\Theta_{t,0}^{(1)}(r,\theta)- \Theta_{t,0}^{(2)}(r,\theta)\right|^2r\rmd r \dd\theta
&\lesssim\int_{\Omega_\kappa}\left[t^2\left[r^{2	q }+\kappa^q t^q\right]+ \ln\left(\frac{1}{1-4\kappa t/r^2}\right)\right] r\rmd r \dd\theta\notag\\
&\lesssim \kappa^{2\beta}\left[ \left(\kappa^{\beta q}t\right)^2 +\left(\kappa^\frac{q}{q+2}t\right)^{q+2}\right]\notag\\
&\quad+2\kappa t\ln\left(25\kappa^{2\beta}-4\kappa t\right)-\frac{25\kappa^{2\beta}}{2}\ln\left(1-\frac{4\kappa t}{25\kappa^{2\beta}}\right)\notag\\
&\quad-2\kappa t\ln\left(\kappa^{2\beta}-4\kappa t\right)+\frac{\kappa^{2\beta}}{2}\ln\left(1-\frac{4\kappa t}{\kappa^{2\beta}}\right)\notag\\
&\lesssim  \kappa^{2\beta}\left[ \left(\kappa^\frac{q}{q+2}t\right)^2 +\left(\kappa^\frac{q}{q+2}t\right)^{q+2}\right]\notag\\
&\quad +\kappa^{2\beta}\left[ \kappa^\frac{q}{q+2} t\left|\ln\left(\frac{25-4\kappa^\frac{q}{q+2} t}{1-4\kappa^\frac{q}{q+2} t}\right)\right|+\left|\ln\left(1-\frac{4\kappa^\frac{q}{q+2} t}{25}\right)\right|\right].\nonumber
\end{align}
In the end, in view of \eqref{eq:constraint} and \eqref{eq:boundstheta0bis}, we have
\begin{align}
\int_{\Omega_\kappa}\E ^{1,2}\left|\Theta_{t,0}^{(1)}(r,\theta)- \Theta_{t,0}^{(2)}(r,\theta)\right|^2r\rmd r \dd\theta
&\lesssim  \|\rho_0\|^2_{L^2}\left[ \left(t\kappa^\frac{q}{q+2}\right)^2 +\left(\kappa^\frac{q}{q+2}t\right)^{q+2}\right]\notag\\
& +\|\rho_0\|^2_{L^2}\left[ \kappa^\frac{q}{q+2} t\left|\ln\left(\frac{25-4\kappa^\frac{q}{q+2} t}{1-4\kappa^\frac{q}{q+2} t}\right)\right|+\left|\ln\left(1-\frac{4\kappa^\frac{q}{q+2} t}{25}\right)\right|\right]. \nonumber
\end{align}
Hence,
\begin{align}\nonumber
\int_{\Omega_\kappa} {\rm Var}\left(\rho_0(\bxi_{t,0}(\bx)\right)\rmd \bx \lesssim
\|\rho_0\|^2_{L^2}\bigg[&\kappa^\frac{q}{q+2} t+\left(t\kappa^\frac{q}{q+2}\right)^2 +\left(\kappa^\frac{q}{q+2}t\right)^{q+2}\notag\\
&+ \kappa^\frac{q}{q+2} t\left|\ln\left(\frac{25-4\kappa^\frac{q}{q+2} t}{1-4\kappa^\frac{q}{q+2} t}\right)\right|+\left|\ln\left(1-\frac{4\kappa^\frac{q}{q+2} t}{25}\right)\right|\bigg],
\quad\label{eq:intkappa}
\end{align}
for all $ t<\kappa^{-\frac{q}{q+2}}/4$.
\medskip

\noindent \textbf{Contribution of the region $\Omega_{\kappa}'$.}
Now we need to estimate two cases when $r\in I^c_{\kappa}$, and essentially we need to know
when the trajectory $R_{t,0}(r,\theta)\in [2\kappa^\beta,4\kappa^\beta]$. 
The first case is when $r\in(0,\kappa^\beta)$. In this case, \eqref{eq:Rminusr} implies 
\be
\mathbb{P}\left[R_{t,0}(r,\theta)>2\kappa^\beta\right]=\mathbb{P}\left[R_{t,0}(r,\theta)-r>2\kappa^\beta-r\right]\leq \frac{\E |R_{t,0}(r,\theta)-r|^{2}}{(2\kappa^{\beta}-r)^2}\lesssim \frac{\kappa t}{\kappa^{2\beta}}\lesssim \kappa^\frac{q}{q+2} t.
\ee
Therefore,
\be
\mathbb{P}\left[ R_{t,0}(r,\theta)\in \spt(\varphi)\right]\lesssim \kappa^\frac{q}{q+2} t, \qquad \forall r\in(0,\kappa^\beta).
\ee
The second case is when $r\in(5\kappa^\beta,\infty)$. Then
\begin{align}
\mathbb{P}\left[R_{t,0}(r,\theta)<4\kappa^\beta\right]
&= \mathbb{P}\left[r-R_{t,0}(r,\theta)>r-4\kappa^\beta\right]
=\mathbb{P}\left[-\int_0^t\frac{\kappa}{R_{t,s}}\dd s+\sqrt{2\kappa}\ W_0>r-4\kappa^\beta\right]\notag\\
&\leq\mathbb{P}\left[\sqrt{2\kappa}\ W_0>r-4\kappa^\beta\right]
\leq \frac{4\kappa^{2}}{(r-4\kappa^\beta)^{4}}\E |W_0|^{4}\lesssim \frac{\kappa^{2}t^2}{(r-4\kappa^\beta)^{4}}.
\end{align}
Therefore,
\be
\mathbb{P}\left[ R_{t,0}(r,\theta)\in \spt(\varphi)\right]\lesssim  \frac{\kappa^{2}t^2}{(r-4\kappa^\beta)^{4}}, \qquad \forall r\in(5\kappa^\beta,\infty).
\ee
By writing
\begin{align}
\Omega'_\kappa=\Omega'_{\kappa,1}\cup\Omega'_{\kappa,2}, \qquad \Omega'_{\kappa,1}= (0,\kappa^\beta)\times\T,
\qquad \Omega'_{\kappa,2}= (5\kappa^\beta,\infty)\times\T,
\end{align}
we split
\begin{align}
\int_{\Omega'_\kappa}\E |\rho_0(\bxi_{t,0}(\bx))|^2\rmd \bx=\int_{\Omega'_{\kappa,1}}\E |\rho_0(\bxi_{t,0}(\bx))|^2\rmd \bx
+\int_{\Omega'_{\kappa,2}}\E |\rho_0(\bxi_{t,0}(\bx))|^2\rmd \bx \nonumber
\end{align}
 and argue as we did in \S \ref{subMain1}.  Namely, we obtain the estimate
\begin{align}
\int_{\Omega'_{\kappa,1}}\E |\rho_0(\bxi_{t,0}(\bx))|^2\rmd \bx
&=\int_{\Omega'_{\kappa,1}}\E \left[\mathbbm{1}_{\left[ \rho_0(\bxi_{t,0}(\bx))\neq 0\right]} |\rho_0(\bxi_{t,0}(\bx))|^2\right]\rmd \bx\notag\\
&\leq \left(\int_{\Omega'_{\kappa,1}}\E  |\rho_0(\bxi_{t,0}(\bx))|^4\rmd \bx\right)^{1/2}\left( \int_{\Omega'_{\kappa,1}}\E \left[\mathbbm{1}_{\left[ \rho_0(\bxi_{t,0}(\bx))\neq 0\right]}\right] \rmd \bx\right)^{1/2}\notag\\
&\lesssim \|\rho_0\|^2_{L^4}\left( \kappa^{2\beta}\kappa^\frac{q}{q+2} t \right)^{1/2}\lesssim \|\rho_0\|^2_{L^2}\left(\kappa^\frac{q}{q+2} t \right)^{1/2},\nonumber
\end{align}
and additionally
\begin{align}
\int_{\Omega'_{\kappa,2}}\E |\rho_0(\bxi_{t,0}(\bx))|^2\rmd \bx
&\leq \left( \int_{\Omega'_{\kappa,2}}\E  |\rho_0(\bxi_{t,0}(\bx))|^4\rmd \bx\right)^{1/2}\left( \int_{\Omega'_{\kappa,2}}\E \left[\mathbbm{1}_{\left[ \rho_0(\bxi_{t,0}(\bx))\neq 0\right]}\right] \rmd \bx\right)^{1/2}\notag\\
&\lesssim \|\rho_0\|^2_{L^4}\kappa t\left(  \int_{5\kappa^\beta}^\infty \frac{r}{(r-4\kappa^\beta)^{4}}\rmd r\right)^{1/2}\lesssim \|\rho_0\|^2_{L^4}\kappa t\left( \kappa^{-2\beta}\right)^{1/2}\notag\\
&
\lesssim \kappa^\beta \kappa t\left( \kappa^{-2\beta}\right)^{1/2} 
\ \lesssim\  \kappa^{2\beta} \kappa^{1-2\beta} t \ \lesssim \  \|\rho_0\|^2_{L^2}\kappa^\frac{q}{q+2} t. \nonumber
\end{align}
Hence, putting everything together
\begin{align}
\int_{\Omega'_\kappa}\E |\rho_0(\bxi_{t,0}(\bx))|^2\rmd \bx\lesssim  \|\rho_0\|^2_{L^2}\left[\left(\kappa^\frac{q}{q+2} t \right)^{1/2}+\kappa^\frac{q}{q+2} t\right],
\end{align}
which in turn implies
\begin{align}\label{eq:boundVar2rad}
\int_{\Omega'_\kappa} {\rm Var}\left(\rho_0(\bxi_{t,0}(\bx))\right)\rmd \bx\lesssim \int_{\Omega'_\kappa}\E |\rho_0(\bxi_{t,0}(\bx))|^2\rmd \bx
\lesssim  \|\rho_0\|^2_{L^2}\left[\left(\kappa^\frac{q}{q+2} t \right)^{1/2}+\kappa^\frac{q}{q+2} t\right].
\end{align}
Thus, \eqref{eq:intkappa} together with \eqref{eq:boundVar2rad} yields
\begin{align}
\int_{\Omega} {\rm Var}\left(\rho_0(\bxi_{t,0}(\bx)\right)\rmd \bx
\lesssim \|\rho_0\|^2_{L^2}\left[\left(\kappa^\frac{q}{q+2} t \right)^{1/2}
+\kappa^\frac{q}{q+2} t\Bigg|\ln\left(\frac{25-4\kappa^\frac{q}{q+2} t}{1-4\kappa^\frac{q}{q+2} t}\right)\Bigg|+\left|\ln\left(1-\frac{4\kappa^\frac{q}{q+2} t}{25}\right)\right|\right],
\end{align}
for all  $ t<\kappa^{-\frac{q}{q+2}}/4$, which in turn implies the upper bound
\begin{align}\label{eq:finalbourad}
\kappa \int_0^t \int_{\Omega} |\nabla \rho(s,\bx)|^2 \rmd \bx\rmd s 
\lesssim \|\rho_0\|^2_{L^2}\left[\left(\kappa^\frac{q}{q+2} t \right)^{1/2}
+\kappa^\frac{q}{q+2} t\left|\ln\left(\frac{25-4\kappa^\frac{q}{q+2} t}{1-4\kappa^\frac{q}{q+2} t}\right)\right|+\left|\ln\left(1-\frac{4\kappa^\frac{q}{q+2} t}{25}\right)\right|\right],
\end{align}
for all  $ t<\kappa^{-\frac{q}{q+2}}/4$. We are now in the position of applying Lemma \ref{lem:crucial} and conclude the proof of the third part of
Theorem \ref{thm:main} (namely, sharpness of the timescale in the radial case up to the log correction).
\vspace{2mm}

\noindent \textbf{Acknowledgements.} The research of MCZ was partially supported by the Royal Society through a 
University Research Fellowship (URF\textbackslash R1\textbackslash 191492).
The research of  TD was partially supported by the NSF DMS-1703997 grant.
We would like to thank Giuseppe Cannizzaro, Franco Flandoli, Martin Hairer and Grigorios Pavliotis for inspiring discussions.

\bibliographystyle{abbrv}
\bibliography{biblio}

\end{document}